\documentclass[12pt,bezier]{article}
\usepackage{graphicx,color}
\usepackage{epsfig}
\usepackage{amsmath,amsthm,graphicx,amsfonts}

\newcommand{\comment}[1]{}

\newtheorem{theorem}{Theorem}[section]

\newtheorem{lemma}[theorem]{Lemma}

\newtheorem{guess}[theorem]{Conjecture}
\theoremstyle{definition}

\def \cB {{\cal B}}
\def \cC {{\cal C}}
\def \cD {{\cal D}}

\def \cS {{\cal S}}

\def \Z {\mathbb Z}


\author{
S.\ Akbari \footnotemark[1] \footnotemark[2]
\and
A.C.\ Burgess\footnotemark[3]
\and
P.\ Danziger\footnotemark[4]
\and
E.\ Mendelsohn\footnotemark[4] \footnotemark[5]
}

\date{}
\title{Zero-Sum Flows for Steiner Triple Systems}

\begin{document}

\maketitle

\footnotetext[1]{Department of Mathematical Sciences, Sharif University of Technology, Tehran, Iran.}
\footnotetext[2]{School of Mathematics, Institute for Research in Fundamental Sciences, IPM, Tehran, Iran}
\footnotetext[3]{Department of Mathematical Sciences, University of New Brunswick, Saint John, NB, Canada.}
\footnotetext[4]{Department of Mathematics, Ryerson University, Toronto, ON, Canada.}
\footnotetext[5]{Department of Mathematics, University of Toronto, Toronto, ON, Canada.}

\begin{abstract}
Given a $2$-$(v,k,\lambda)$ design, $\cS=(X,\cB)$, a {\it zero-sum $n$-flow} of $\cS$ is a map $f: \cB \longrightarrow \{\pm 1, \ldots ,\pm (n-1)\}$ such that for any point $x\in X$, the sum of $f$ around all the blocks incident with $x$ is zero. 
It has been conjectured that every Steiner triple system, STS$(v)$, on $v$ points $(v>7)$ admits a zero-sum $3$-flow.
We show that for every pair $(v,\lambda)$, for which a triple system, TS$(v,\lambda)$ exists, there exists one which has a zero-sum $3$-flow, except when $(v,\lambda)\in\{(3,1), (4,2), (6,2), (7,1)\}$ and except possibly when $v \equiv 10\pmod{12}$ and $\lambda = 2$.
We also give a $O(\lambda^2v^2)$ bound on $n$ and a recursive result which shows that every STS$(v)$ with a zero-sum $3$-flow can be embedded in an STS$(2v+1)$ with a zero-sum $3$-flow if $v\equiv 3 \pmod 4$, a zero-sum $4$-flow if $v\equiv 3 \pmod 6$ and with a zero-sum $5$-flow if $v\equiv 1 \pmod 4$.
\end{abstract}
\noindent {Keywords: Zero-sum flow, Steiner triple system, Steiner system, chromatic index of Steiner systems. }\\
\noindent {AMS 2010 Subject Classification: 05B05, 05B20, 05C15, 05C21.}

\section{ Introduction}

Let $G$ be a graph with the vertex set $V(G)$ and edge set $E(G)$.  A {\it  $k$-edge colouring} of a graph
$G$ is a function $f: E(G)\longrightarrow L$ such that $|L| = k$ and $f(e_1)\neq f(e_2)$ for every two adjacent edges $e_1$ and $e_2$.
The {\it chromatic index} of $G$, denoted by $\chi'(G)$ is the minimum number $k$ for which $G$ has a
$k$-edge colouring. A {\it $1$-factor} of a graph $G$ is a set of independent edges which
covers all vertices of $G$. Given a colouring of $G$, a {\it rainbow $1$-factor} is a $1$-factor all of whose edges have different colours.
We denote the complete graph of order $n$ by $K_n$.

A {\it zero-sum flow} of a graph $G$ is an assignment of non-zero real numbers to the edges of $G$ such that the sum of the values of all edges incident with any given vertex is zero. 
Let $n$ be a natural number. A {\it zero-sum $n$-flow} is a zero-sum flow with values from the set \mbox{$\{\pm 1,\ldots ,\pm(n-1)\}$}. For a subset $S\subseteq E(G)$, the {\it weight of $S$} is defined to be the sum of the values of all edges of $S$. Such flows have been studied in \cite{akb 2013, akb 2012, akb 2010, akb 2009, wang}.
Zero-sum flows are motivated by nowhere-zero flows, which were first introduced by Tutte in 1949 \cite{tutte}.

Let $n$ be a positive integer.  A {\it Latin square of order $n$} with entries from $X$ is an $n \times n$ array $L$ such that every row and column of $L$ is a permutation of $X$. Suppose that $L_1$ is a Latin square of order $n$ with entries from $X$ and $L_2$ is a Latin square of order $n$ with entries from $Y$. We say that $L_1$ and $L_2$ are {\it orthogonal} provided that, for every $x \in X$ and for every $y \in Y$, there is a unique cell $(i, j)$ such that $L_1(i, j) = x$ and $L_2(i, j) = y$. 
A Latin square, $L$, is called {\em indempotent} if $L(i,i)=i$ for every $i$.
It is well known that for every positive integer $n\neq 2, 6$, there exist two orthogonal Latin squares of order $n$. A {\em transversal} of a Latin square is a set of cells which between them contain each row, column and entry exactly once. If a square $L$ has an orthogonal mate, $L'$, it is possible to partition the cells of $L$ into transversals, $T_i$, each of which correspond to the positions of the entry $i$ in $L'$.
We refer the reader to \cite{handbook} for notation and further results on Latin squares.

A \emph{$2$-$(v, k, \lambda)$ design} $\cD$ (briefly, $2$-design), is a pair $(X, \cB)$, where $X$ is a $v$-set of points and $\cB$ is a collection of $k$-subsets of $X$, called {\em blocks}, with the property that every $2$-subset of $X$ is contained in exactly $\lambda$ blocks. Traditionally, the number of blocks and the frequency of occurrences of points in blocks are denoted by $b$ and $r$, respectively. Given an indexing of the points and blocks of a $2$-design, the {\em incidence matrix} of $\cD$ is the $v \times b$ $(0,1)$-matrix $A=[a_{ij}]$, where 
\[
a_{ij} = \left\{\begin{array}{cl}
1 & x_i \in B_j, \, B_j \in \cB \\
0 & \mbox{ otherwise.}
\end{array}
\right.
\]
We refer the reader to \cite{handbook} for notation and further results on designs.

Given a $2$-design $\cD=(X, \cB)$, we define a zero-sum $n$-flow of $\cD$ to be a function $f:\cB \longrightarrow \{\pm 1, \ldots ,\pm (n-1)\}$ such that the sum of the block weights around any point is zero, i.e.\
\[
w(x) = \sum_{x\in B} f(B) = 0.
\]
This is equivalent to finding a vector 
in the nullspace of the incidence matrix of the design whose entries are all in the set $\{\pm 1, \ldots ,\pm (n-1)\}$. Zero-sum $n$-flow of designs have been previously been studied in \cite{akb}.

A {\em parallel class} of a $2$-design is a collection of disjoint blocks which between them contain every point of $X$ exactly once.
A $2$-design is called \emph{resolvable} if there exists a partition of the set of blocks into parallel classes.  
An {\em $\alpha$-resolution class} is a collection of blocks $\cS\subseteq \cB$ which contain every point of $X$ exactly $\alpha$ times. If the block set $\cB$ can be partitioned into $\alpha$-resolution classes we call the design {\em $\alpha$-resolvable}; in this case we denote the number of $\alpha$-resolution classes by $\rho=r/\alpha$.

An {\em automorphism} of a $2$-design is a permutation of the point set which maps blocks to blocks. 
The set of automorphisms of a $2$-design forms a group, called the {\em automorphism group}, $G$, of the design.
A $2$-$(v,k,\lambda)$ design, $(X,\cB)$, is called {\em cyclic} if its automorphism group contains a cycle of length $v$. In such cases there exists a set of {\em starter blocks} $\cS \subseteq \cB$, such that the full design may be obtained by acting on these blocks by $G$.
The orbit of a starter block which has length less than $v$ is called a {\em short orbit}; if the orbit is of length $v$, it is called a {\em full orbit}.

A $2$-$(v,3,1)$ design is called a \emph{Steiner triple system} of order $v$, denoted $\emph{\emph{STS}}(v)$, and a resolvable STS$(v)$ is called a {\em Kirkman triple system}, and denoted KTS$(v)$. 
It is well known that a Steiner triple system on $v$ points exists if and only if $v \equiv 1$ or $3\pmod{6}$ and that a KTS$(v)$ exists if and only if $v\equiv 3\pmod 6$.  
More generally, we call a $2$-$(v,3,\lambda)$ design a {\em triple system } and denote it by $\mathrm{TS}(v,\lambda)$.  We refer the reader to \cite{handbook,col} for further results on triple systems. We have the following existence result.
\begin{theorem} [\cite{hanani}]
A TS$(v, \lambda)$ exists if and only if $v \neq 2$ and $\lambda \equiv 0 \pmod{\gcd{(v - 2, 6)}}$.
\end{theorem}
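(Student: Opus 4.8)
The plan is to treat necessity and sufficiency separately, reducing the sufficiency half to a small set of base constructions via a union argument.

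First I would establish necessity by the standard replication and block counting. In a TS$(v,\lambda)$ each point lies on $r$ blocks and there are $b$ blocks in all; counting the incident point-pairs gives $2r=\lambda(v-1)$ and counting incidences gives $3b=vr$, so that
\[
r=\frac{\lambda(v-1)}{2}, \qquad b=\frac{\lambda v(v-1)}{6}.
\]
Both must be non-negative integers, forcing $\lambda(v-1)\equiv 0\pmod 2$ and $\lambda v(v-1)\equiv 0\pmod 6$, while $v=2$ is excluded since there is no $3$-subset to serve as a block. A short case analysis on $v \bmod 6$ shows that these two divisibility conditions together are exactly $\lambda\equiv 0\pmod{\gcd(v-2,6)}$: indeed $\gcd(v-2,6)$ takes the values $1,6,1,2,3,2$ as $v$ runs through the residues $1,2,3,4,5,0\pmod 6$, and in each class the integrality of $r$ and $b$ demands precisely that divisibility of $\lambda$.

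For sufficiency I would first reduce the index. Since the union of the block multisets of a TS$(v,\lambda_1)$ and a TS$(v,\lambda_2)$ is a TS$(v,\lambda_1+\lambda_2)$, it suffices to build, for each admissible $v$, a single triple system of the minimal index $\lambda_0=\gcd(v-2,6)$; every larger admissible $\lambda$ is a multiple of $\lambda_0$ and is obtained by repetition. This splits the problem into four families: $\lambda_0=1$ when $v\equiv 1,3\pmod 6$ (Steiner triple systems), $\lambda_0=2$ when $v\equiv 0,4\pmod 6$, $\lambda_0=3$ when $v\equiv 5\pmod 6$, and $\lambda_0=6$ when $v\equiv 2\pmod 6$. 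The Steiner case is classical: the Bose construction on $v=6t+3$ points uses an idempotent commutative quasigroup (equivalently an idempotent symmetric Latin square), and the Skolem construction handles $v=6t+1$, both relying on the Latin-square machinery recalled in the introduction. For the remaining indices I would invoke the standard recursive engine of group divisible designs: take a master GDD, inflate each point by a constant factor using mutually orthogonal Latin squares (which exist for all orders $\neq 2,6$), and fill in the groups with small ingredient triple systems, thereby propagating a valid TS from small orders to arbitrarily large ones. A modest table of directly constructed base cases seeds the recursion in each residue class.

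The main obstacle is the sufficiency for the higher-index families, not the counting. Making the GDD recursion close requires that the needed ingredient designs and the needed sets of MOLS actually exist at every order that arises, and that the finitely many small or awkward orders --- where MOLS fail (order $6$) or where the generic recipe breaks --- are disposed of by hand. Checking that these base cases cover every residue class and index, and that no admissible pair $(v,\lambda)$ is left uncovered, is where the real effort lies; this bookkeeping is exactly the content of Hanani's original analysis.
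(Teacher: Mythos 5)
The paper does not prove this statement at all: it is quoted verbatim from Hanani's 1961 paper and used as a black box, so there is no internal proof to compare against. Judged on its own terms, your necessity argument is complete and correct: the counts $r=\lambda(v-1)/2$ and $b=\lambda v(v-1)/6$, the exclusion of $v=2$, and the residue-by-residue check that integrality of $r$ and $b$ is equivalent to $\gcd(v-2,6)\mid\lambda$ all hold (your table of values $1,6,1,2,3,2$ for $v\equiv 1,2,3,4,5,0\pmod 6$ is right). The reduction of sufficiency to the minimal index $\lambda_0=\gcd(v-2,6)$ by taking unions of copies is also valid, since admissibility of $\lambda$ is exactly divisibility by $\lambda_0$.

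The gap is the sufficiency half itself. For $\lambda_0=1$ you name Bose and Skolem but do not carry out the $v\equiv 1\pmod 6$ construction, which is the nontrivial one. For $\lambda_0\in\{2,3,6\}$ (i.e.\ $v\equiv 0,4\pmod 6$, $v\equiv 5\pmod 6$, $v\equiv 2\pmod 6$) you give no construction at all: no specific master GDDs, no specification of which ingredient designs fill the groups, no list of base cases, and no verification that the recursion reaches every order in each residue class while avoiding the MOLS failure at order $6$. You acknowledge this yourself by writing that the bookkeeping ``is exactly the content of Hanani's original analysis'' --- but that analysis \emph{is} the theorem; without it the argument establishes only necessity plus a plan. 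As written, the proposal would need either the explicit constructions (e.g.\ difference-family or quasigroup constructions for each $\lambda_0$, as in Hanani's paper or in Colbourn--Rosa) or a citation doing the same work, which is precisely what the paper opts for.
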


A connection between zero sum flows on Steiner triple systems and more traditional geometric flows may be obtained by considering the well known embedding of the points of a TS$(7,2)$ into the torus, so that the blocks form equilateral triangles which may be coloured black and white, in such a way that no triangles of the same colour share an edge \cite{emch}. Giving each black triangle weight $1$ and each white triangle weight $-1$, it is easy to see that this is equivalent to a zero-sum $2$-flow of the TS$(7,2)$.

In this paper we consider the application of zero-sum flows to triple systems. 
In \cite{akb} it was conjectured that every $\mathrm{STS}(v)$ with $v>7$ admits a zero-sum $3$-flow. 
We generalise this conjecture to the case of arbitrary $\lambda$ as follows.
\begin{guess} \label{flow conjecture}
Every TS$(v, \lambda)$ admits a zero-sum $3$-flow, except when $(v,\lambda)\in\{(3,1), (4,2), (6,2), (7,1)\}$.
\end{guess}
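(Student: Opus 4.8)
The plan is to reformulate the conjecture as a statement about integer null vectors of the incidence matrix $A$ and then to attack the strong ``every design'' quantifier by a local switching argument, using the four exceptions as the tightness benchmark that any proof must reproduce. Writing $r = \lambda(v-1)/2$ for the replication number, a zero-sum $3$-flow is precisely a vector $f \in \{\pm 1, \pm 2\}^{b}$ with $Af = 0$. The first step is a parity reduction: a block receives an odd value exactly when $f(B) = \pm 1$, so the condition $w(x) = 0$ forces each point to lie in an \emph{even} number of $\pm 1$-blocks. This separates the problem into the regime $r$ even, where a zero-sum $2$-flow (a balanced $\pm 1$ signing of the blocks at every point) would suffice whenever it exists, and the regime $r$ odd, where every point must carry an odd number of $\pm 2$-blocks so that the value $2$ is genuinely needed.

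The main engine I would build is a theory of \emph{local trades}. Call a signed collection of blocks a \emph{null trade} if assigning it values $\pm 1$ and all other blocks $0$ already balances every point; Pasch and mitre configurations supply small null trades present in essentially every triple system. The key structural claim to establish is that the integer lattice $\{z \in \Z^{b} : Az = 0\}$ is generated by such small-support trades. Granting this, I would start from any rational solution of $Af = 0$ (which exists because the null space is nontrivial once $v$ is not tiny), clear denominators to obtain an integer null vector, and then repeatedly add null trades to push every coordinate into $\{-2,-1,0,1,2\}$ while controlling a potential such as $\sum_{B} f(B)^{2}$. A second phase of switching along trades that meet the support in prescribed ways would eliminate the remaining zero coordinates, converting the $\{0,\pm1,\pm2\}$-solution into a nowhere-zero one.

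For an inductive backbone I would invoke the doubling construction of this paper, which embeds a flow-admitting STS$(v)$ into an STS$(2v+1)$, together with analogous $\lambda$-fold and product constructions; this handles a finite list of base cases checkable by computer, including the verification that exactly $(v,\lambda)\in\{(3,1),(4,2),(6,2),(7,1)\}$ fail. However---and this is the crux---such constructions produce only \emph{particular} designs, whereas the conjecture asserts the property for \emph{every} TS$(v,\lambda)$, and no classification of triple systems is available to induct over. The local switching argument is therefore indispensable precisely because it must operate on an arbitrary, unstructured design.

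The hard part will be proving that the switching process never gets stuck. A priori a design could present a configuration of blocks forcing some coordinate to remain $0$ or to exceed $2$ in magnitude no matter which null trades are applied, and ruling out such obstructions uniformly, for all $v$, all $\lambda$, and all designs, is the entire content of the conjecture. I expect this to demand a careful analysis of the local neighbourhood of a point---showing that the $r$ blocks through a given point can always be signed compatibly with the global constraint---together with a potential-function or discharging argument guaranteeing termination. The four exceptions show that the analysis must be delicate enough to detect exactly the degenerate parameter sets where the requisite trades are unavailable, so any successful proof will hinge on quantifying the abundance of null trades as a function of $(v,\lambda)$ and confirming that this abundance persists outside the exceptional list.
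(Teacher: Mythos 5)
You should first note that the statement you set out to prove is Conjecture~\ref{flow conjecture} of the paper: it is posed as an open problem, not proved there. What the paper actually establishes (Theorem~\ref{TS result}) is the much weaker assertion that for every admissible pair $(v,\lambda)$ outside the exceptional list there \emph{exists some} TS$(v,\lambda)$ with a zero-sum $3$-flow (with the possible exception $v\equiv 10\pmod{12}$, $\lambda=2$), via resolvable, cyclic and Latin-square constructions; the universal quantifier over all designs is untouched. So there is no proof in the paper to compare yours against, and your proposal must stand on its own.

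It does not. Your plan has two genuine gaps, and you partly concede the second. First, the structural claim on which the whole switching engine rests --- that the integer lattice $\{z\in\Z^{b} : Az=0\}$ is generated by small-support null trades ``present in essentially every triple system,'' such as Pasch and mitre configurations --- is false as stated: anti-Pasch Steiner triple systems exist for all admissible orders $v\neq 7,13$, anti-mitre systems exist in abundance, and systems avoiding both configurations exist for infinitely many orders. The supply of small trades in an \emph{arbitrary} design therefore cannot be taken for granted, and quantifying which trades every design must contain is itself an open structural problem. Second, even granting a generating set of trades, your reduction-and-switching phase (clear denominators, push coordinates into $\{0,\pm 1,\pm 2\}$ by a potential argument, then eliminate zeros) comes with no termination or non-obstruction argument; as you say yourself, ruling out configurations where the process gets stuck ``is the entire content of the conjecture.'' An outline whose critical lemma is acknowledged to be the full difficulty of the statement is a research programme, not a proof. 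Your parity observation (each point must lie in an even number of $\pm 1$-blocks, so when $r=\lambda(v-1)/2$ is odd every point needs a $\pm 2$-block) is correct and is a useful sanity check --- it explains, for instance, why zero-sum $2$-flows cannot suffice for STS$(v)$ with $v\equiv 3\pmod 4$ --- but nothing in the proposal converts it into progress on the universal statement, and the four exceptional pairs are handled in the paper by Theorem~\ref{symmetric} and computer search, not by any mechanism your plan supplies.
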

In support of the original conjecture, a computer search has shown that every STS$(v)$, $7<v\leq 15$, admits a zero-sum $3$-flow; see \cite{akb}. In this paper, as further evidence in support of both the original conjecture in~\cite{akb} and Conjecture~\ref{flow conjecture}, we show that for every admissible order $v>7$ there exists an STS$(v)$ which admits a zero-sum $3$-flow. Further, we show in support of Conjecture~\ref{flow conjecture} that for every admissible $(v,\lambda)\not\in\{(3,1), (4,2), (6,2), (7,1)\}$, there exists a TS$(v,\lambda)$ that admits a zero-sum $3$-flow, except possibly when $\lambda=2$ and $v\equiv 10\pmod{12}$. In this latter case we show that a TS$(v,\lambda)$ admitting a zero-sum $5$-flow exists.

In the rest of this section we give some general results which apply to $2$-designs in general. In the next section we then show the existence result for triple systems mentioned above. In Section~\ref{bound} we give an $O((\lambda v)^2)$ bound for a zero-sum flow on a TS$(v,\lambda)$. Finally, in Section~\ref{recursive}, we provide some recursive $(2v+1)$-constructions.

We now develop some general tools for determining zero-sum flows applicable to $2$-designs.
We begin with the following result on $\alpha$-resolvable designs, recalling that we denote the number of $\alpha$-resolution classes by $\rho=r/\alpha$.
\begin{lemma}
\label{alpha}
An $\alpha$-resolvable $2$-$(v, k, \lambda)$ design has a zero-sum $3$-flow if $\rho > 1$ is odd and a zero-sum $2$-flow if $\rho$ is even.
\end{lemma}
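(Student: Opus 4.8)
The plan is to exploit the defining property of an $\alpha$-resolution class directly: every point lies in exactly $\alpha$ of its blocks. First I would index the $\alpha$-resolution classes as $\cS_1, \ldots, \cS_\rho$ and assign a single constant value $c_i$ to every block of $\cS_i$. Then for any point $x \in X$ the weight around $x$ decomposes into a sum over classes,
\[
w(x) = \sum_{i=1}^{\rho} c_i \cdot |\{B \in \cS_i : x \in B\}| = \alpha \sum_{i=1}^{\rho} c_i,
\]
which is independent of $x$. Thus the whole problem reduces to choosing the constants $c_i$, from the allowed value set, so that $\sum_{i=1}^{\rho} c_i = 0$; the resulting $f$ is then automatically a zero-sum flow.

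For the even case I would take $\rho/2$ of the classes and set $c_i = +1$, and set $c_i = -1$ on the remaining $\rho/2$ classes. Since $\rho$ is even these counts are integers, the constants sum to $0$, and every value lies in $\{\pm 1\}$, giving a zero-sum $2$-flow.

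For the odd case, $\rho > 1$ odd forces $\rho \geq 3$ with $\rho - 3$ even, and a $\pm 1$ split alone cannot sum to zero over an odd number of terms, so I would bring in the value $2$. Explicitly, assign $+1, +1, -2$ to three of the classes (which sum to $0$), and partition the remaining $\rho - 3$ classes into pairs, assigning $+1$ and $-1$ within each pair. The total is $0$ and every value lies in $\{\pm 1, \pm 2\}$, yielding a zero-sum $3$-flow.

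I expect no serious obstacle here: the entire content is the observation that a constant-per-class assignment produces a point-independent weight equal to $\alpha\sum_i c_i$, after which the argument is just a parity bookkeeping exercise. The one point worth flagging is the hypothesis $\rho > 1$, which is genuinely needed: when $\rho = 1$ a single nonzero constant can never sum to zero, so this method fails and the restriction is not an artifact of the proof.
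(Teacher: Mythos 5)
Your proposal is correct and follows essentially the same approach as the paper: assign a constant to each $\alpha$-resolution class ($\pm 1$ in pairs when $\rho$ is even; $+1,+1,-2$ on three classes plus $\pm1$ pairs when $\rho$ is odd), with the observation that each point's weight is $\alpha\sum_i c_i$. Your writeup is in fact slightly more explicit than the paper's about why the per-class constants suffice.
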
\begin{proof}
If $\rho$ is even, we colour all of the blocks of each of the $\alpha$-resolution classes alternately $+1$ and $-1$. Each pair of oppositely signed classes then generates a zero-sum flow. If $\rho$ is odd we choose three classes and colour the blocks of two of them with $+1$ and the blocks of the third $-2$.  The remaining classes (of which there are an even number) have their blocks coloured with $+1$ and $-1$ as above.
\end{proof}
Taking $\alpha=1$ we obtain the following result.
\begin{theorem}
\label{resolvable}
A resolvable design with at least two classes has a zero-sum $3$-flow.
\end{theorem}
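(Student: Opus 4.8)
The plan is to deduce this directly from Lemma~\ref{alpha} by observing that resolvability is precisely $1$-resolvability. A parallel class is a collection of disjoint blocks covering every point exactly once, which is exactly a $1$-resolution class; hence a resolvable design is an $\alpha$-resolvable design with $\alpha=1$, and the number of parallel classes is $\rho=r/1=r$. Under this identification, the hypothesis that there are at least two classes is simply the statement $\rho\geq 2$, so the theorem becomes the assertion that a $1$-resolvable design with $\rho\geq 2$ admits a zero-sum $3$-flow.

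With this reduction in hand, I would split into the two parities of $\rho$ already handled by Lemma~\ref{alpha}. If $\rho$ is odd, then $\rho\geq 2$ forces $\rho\geq 3$, so in particular $\rho>1$ is odd and Lemma~\ref{alpha} supplies a zero-sum $3$-flow outright. If $\rho$ is even, Lemma~\ref{alpha} instead supplies a zero-sum $2$-flow.

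The only point requiring a word of justification is that the even case still yields a zero-sum $3$-flow. This is immediate from the definitions: a zero-sum $2$-flow takes its values in $\{\pm 1\}$, and since $\{\pm 1\}\subseteq\{\pm 1,\pm 2\}$, any such flow is a fortiori a zero-sum $3$-flow. Combining the two parities covers all $\rho\geq 2$ and completes the argument.

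I do not anticipate a genuine obstacle here, as the statement is essentially a corollary of Lemma~\ref{alpha}; the substantive content lives in that lemma's alternating-colouring construction. The two things to get right are the bookkeeping that $\alpha=1$ makes $\rho=r$ the number of parallel classes, so that \emph{``at least two classes''} matches the lemma's hypotheses, and the trivial containment of value sets that promotes a zero-sum $2$-flow to a zero-sum $3$-flow.
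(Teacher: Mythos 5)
Your proposal is correct and takes exactly the paper's route: the paper derives Theorem~\ref{resolvable} simply by taking $\alpha=1$ in Lemma~\ref{alpha}, which is precisely your reduction. Your extra remarks --- that ``at least two classes'' means $\rho\geq 2$ so the odd case has $\rho\geq 3$, and that a zero-sum $2$-flow is a fortiori a zero-sum $3$-flow --- just make explicit the bookkeeping the paper leaves implicit.
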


Clearly there can be no resolvable $\mathrm{STS}(6v+1)$. A {\em Hanani triple system} of order $6v+1$ has $3v$ almost parallel classes $P_1, P_2, \ldots, P_{3v}$ (each of which contain $6v$ points), and one partial parallel class $P_0$ of size $v$, containing $3v$ points. Thus, Hanani triple systems are in some sense as close to resolvable as one can get when the number of points is $1\pmod 6$. There exists a Hanani triple system of order $6v+1$ if and only if $v \neq 1,2$~\cite{hts}.  We now show that these also admit a zero-sum $3$-flow.

\begin{theorem}
Let $v \geq 3$ be an integer. Any Hanani triple system of order $6v+1$ admits a zero-sum $3$-flow.
\end{theorem}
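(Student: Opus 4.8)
The plan is to imitate the sign-assignment idea behind Lemma~\ref{alpha}, giving a single value to every block of a given almost parallel class, and to use the partial class $P_0$ to absorb the defect caused by the fact that each almost parallel class misses a point. First I would record the relevant parameters: in an $\mathrm{STS}(6v+1)$ every point lies in exactly $r=3v$ blocks, and each of the $3v$ almost parallel classes $P_1,\dots,P_{3v}$ omits exactly one point. The crucial structural observation is that the map sending a class to the point it omits is a bijection onto the $3v$ points covered by $P_0$: a point outside $P_0$ occurs $3v$ times and hence lies in all $3v$ classes (so it is omitted by none), while a point of $P_0$ occurs once in $P_0$ and therefore in exactly $3v-1$ classes (so it is omitted by exactly one). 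Since there are $3v$ classes and $3v$ points in $P_0$, the correspondence is a bijection. Writing $m_i$ for the point omitted by $P_i$, each block $\{m_a,m_b,m_c\}$ of $P_0$ thus singles out a triple of classes $P_a,P_b,P_c$, and these $v$ triples partition the $3v$ classes.

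Next I would define the flow. Choose for each block $B\in P_0$ a value $\epsilon_B\in\{\pm1,\pm2\}$ subject only to the single constraint $\sum_{B\in P_0}\epsilon_B=0$; this is possible for every $v\ge 3$ (take half $+1$ and half $-1$ when $v$ is even, and $+2,-1,-1$ together with $\tfrac{v-3}{2}$ cancelling $\pm1$ pairs when $v$ is odd). Now set $f(B)=\epsilon_B$ for each block $B$ of $P_0$, and give every block of each of the three classes $P_a,P_b,P_c$ in the triple determined by $B$ this same value $\epsilon_B$; denote by $c_i$ the common value so assigned to the blocks of $P_i$, so that $c_a=c_b=c_c=\epsilon_B$ for the triple attached to $B$.

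It then remains to check the zero-sum condition at each point, which splits into two cases. A point $x\notin P_0$ lies in all $3v$ classes and in no block of $P_0$, so its weight is $\sum_i c_i = \sum_{B\in P_0}3\epsilon_B = 3\sum_B\epsilon_B = 0$. A point $m_a\in P_0$ lies in its $P_0$-block $B$ (contributing $\epsilon_B$) and in every class except $P_a$; since $c_a=\epsilon_B$ by construction, its weight is $\epsilon_B+\bigl(\sum_i c_i - c_a\bigr)=\epsilon_B+(0-\epsilon_B)=0$. I expect the only real content of the argument to be the structural bijection between the classes and the points of $P_0$; once that is in hand, the device of keeping $\epsilon_B$ constant across a whole triple together with its $P_0$-block forces each point to cancel locally, and the lone global constraint $\sum_B\epsilon_B=0$ is met trivially for $v\ge3$. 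The only point needing care is the parity of $v$ in selecting the $\epsilon_B$, but because $\{\pm1,\pm2\}$ already contains a value ($+2$) incommensurate with $1$, no genuine obstruction arises there.
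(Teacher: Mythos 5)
Your proposal is correct and follows essentially the same route as the paper: assign values summing to zero to the blocks of $P_0$ (alternating $\pm1$ when $v$ is even, using $2,-1,-1$ plus cancelling pairs when $v$ is odd) and propagate each block's value to the three almost parallel classes omitting its points. The only difference is that you spell out the counting argument for the bijection between classes and points of $P_0$ and the final verification, both of which the paper leaves implicit.
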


\begin{proof}
Let the blocks of $P_0$ be $B_1, B_2, \ldots, B_v$. For $i=1,2,\ldots,3v$, let $x_i$ be the point which does not appear in a block of $P_i$.  Then the set of vertices which appear in blocks in $P_0$ is $\{x_1, x_2, \ldots, x_{3v}\}$.  

First suppose that $v$ is even.  For $i=1,2,\ldots,v$, assign $B_i$ the label $(-1)^i$.  Next we colour the blocks of the remaining parallel classes.  For each $i=1,2,\ldots,3v$, if $x_i$ is in a block with label $j$, assign label $j$ to the blocks of parallel class $P_i$.  It is easy to check that the resulting labelling is a zero-sum flow.

Now, suppose that $v$ is odd.  For $i=1,2,\ldots,v-3$, assign $(-1)^i$ to block $B_i$.  Assign $-1$ to $B_{v-2}$ and $B_{v-1}$ and 2 to $B_v$.  Again, for each $i=1,2,\ldots,3v$, if $x_i$ is in a block with label $j$, assign label $j$ to parallel class $P_i$.
\end{proof}

Given a cyclic $2$-$(v,k,\lambda)$ design, we note that the set of blocks developed from a single starter which generates a full orbit forms a $k$-resolution class. Thus, if a cyclic design has no short orbits the design is $k$-resolvable. This observation, in conjunction with Theorem~\ref{resolvable}, gives us the following result.
\begin{theorem}
\label{cyclic}
A cyclic $2$-$(v,k,\lambda)$ design with no short orbits has a zero-sum $3$-flow.
\end{theorem}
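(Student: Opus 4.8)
The essential content is already isolated in the remark immediately preceding the statement, so my plan is to make that remark precise and then feed it into Lemma~\ref{alpha}. First I would record that ``no short orbits'' means every orbit of the generating $v$-cycle on the block set has length exactly $v$; hence $\cB$ partitions into $b/v$ full orbits, and I would set $\rho := b/v$.

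Next I would verify the key claim that a single full orbit is a $k$-resolution class. Identifying $X$ with $\Z_v$ and writing a representative starter block as $B=\{b_1,\dots,b_k\}$, the orbit consists of the $v$ distinct translates $B+0, B+1,\dots,B+(v-1)$. For a fixed point $x\in\Z_v$ one has $x\in B+j$ precisely when $x-j\in B$, i.e.\ when $j\in x-B=\{x-b_1,\dots,x-b_k\}$; since the $b_i$ are distinct modulo $v$, these are exactly $k$ distinct values of $j$. Thus $x$ lies in exactly $k$ of the $v$ blocks of the orbit, which is precisely the statement that the orbit is a $k$-resolution class. Choosing one such class per orbit then exhibits the whole design as $k$-resolvable with $\rho$ classes.

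Finally I would invoke Lemma~\ref{alpha} with $\alpha=k$ (the natural companion of Theorem~\ref{resolvable}, which is its $\alpha=1$ special case): when $\rho$ is even it yields a zero-sum $2$-flow, and when $\rho>1$ is odd it yields a zero-sum $3$-flow. Since a zero-sum $2$-flow uses only the values $\pm1\subseteq\{\pm1,\pm2\}$ it is in particular a zero-sum $3$-flow, so a zero-sum $3$-flow exists in every case Lemma~\ref{alpha} covers.

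The one point that needs care---and the only genuine obstacle---is the degenerate case $\rho=1$, a design consisting of a single full orbit ($b=v$), which Lemma~\ref{alpha} does not address: the alternating colouring trick has no second class to balance against. Such designs are characterised by $\lambda(v-1)=k(k-1)$, and indeed they need not admit a zero-sum $3$-flow---the cyclic $\mathrm{STS}(7)$ generated by the single orbit of $\{1,2,4\}$ has $\rho=1$ and is a known exception. I would therefore either impose the standing hypothesis $\rho>1$ or note explicitly that the single-orbit designs (which for triple systems force exactly the excluded pairs $(7,1)$ and $(4,2)$) must be set aside.
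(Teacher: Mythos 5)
Your proof is correct and follows essentially the same route as the paper: each full orbit is a $k$-resolution class, hence the design is $k$-resolvable, and Lemma~\ref{alpha} (the paper actually cites Theorem~\ref{resolvable}, but the $\alpha$-resolvable lemma is what is really needed) finishes the argument. Your caveat about $\rho=1$ is a genuine catch rather than pedantry: the paper states the theorem with no such exclusion, yet the cyclic STS$(7)$ generated by $\{1,2,4\}$ is a cyclic design with no short orbits and, being symmetric, admits no zero-sum flow at all, so the hypothesis that there is more than one orbit (equivalently $b>v$) must indeed be added for the statement to be true.
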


Lemma II 2.3 of \cite{beth} states that the incidence matrix of a non-trivial symmetric design is non-singular. We thus have the following result.
\begin{theorem}
\label{symmetric}
A symmetric $2$-design has no zero-sum flow.
\end{theorem}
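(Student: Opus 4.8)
The plan is to use the stated fact that the incidence matrix $A$ of a non-trivial symmetric design is non-singular, and connect this directly to the definition of a zero-sum flow. Recall that a zero-sum $n$-flow of a design $\cD=(X,\cB)$ is precisely a vector $f$ in the nullspace of the incidence matrix $A$ whose entries all lie in $\{\pm 1,\ldots,\pm(n-1)\}$; in particular such a vector is nonzero. So the existence of any zero-sum flow (for any $n$) would furnish a nonzero vector in the kernel of $A$.

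The key step is therefore to observe that in a symmetric $2$-design we have $v=b$, so the incidence matrix $A$ is square. By the cited Lemma II.2.3 of \cite{beth}, $A$ is non-singular, and hence its nullspace is trivial: the only solution of $Af=0$ is $f=0$. Since a zero-sum flow corresponds to a \emph{nonzero} kernel vector (its entries are nonzero reals), no such flow can exist. This gives the result immediately.

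I would write this out in two short sentences: first recalling the equivalence between zero-sum flows and nonzero nullspace vectors, then invoking non-singularity of the square incidence matrix to conclude the nullspace is $\{0\}$. The only point requiring a word of care is that ``symmetric'' forces $b=v$ so that $A$ is genuinely square (non-singularity is otherwise not even meaningful), and that the flow values being nonzero is exactly what contradicts a trivial kernel. There is no real obstacle here; the entire content is packaged in the quoted lemma, and the proof is essentially a one-line deduction.
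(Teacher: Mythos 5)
Your proof is correct and is exactly the paper's argument: the paper cites Lemma II.2.3 of \cite{beth} (non-singularity of the incidence matrix of a non-trivial symmetric design) and concludes immediately, since a zero-sum flow is a nonzero vector in the nullspace of the (square) incidence matrix. Your write-up just makes the one-line deduction explicit.
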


\section{Existence of zero-sum flows for triple systems}

In this section we show that for every pair $(v,\lambda)$ such that a TS$(v,\lambda)$ exists, there is one with a zero-sum $3$-flow, except when $(v,\lambda)\in\{(3,1), (4,2), (6,2), (7,1)\}$ and except possibly when $v \equiv 10\pmod{12}$ and $\lambda = 2$.
Necessary and sufficient conditions for the existence of a TS$(v,\lambda)$ were settled by Hanani \cite{hanani} in 1961.
\begin{theorem}[\cite{hanani}]
\label{nec}
The necessary conditions for the existence of a TS$(v,\lambda)$ are:
\begin{enumerate}
\item
$\lambda \equiv 1, 5 \pmod{6}$ and $v \equiv 1, 3 \pmod{6}$;
\item
$\lambda \equiv 2, 4 \pmod{6}$ and $v \equiv 0, 1 \pmod{3}$;
\item
$\lambda \equiv 3 \pmod{6}$ and $v \equiv 1 \pmod{2}$;
\item
$\lambda \equiv 0 \pmod{6}$ and $v \geq 3$;
\end{enumerate}
\end{theorem}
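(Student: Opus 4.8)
The plan is to derive these congruences from the two standard counting identities that any $2$-$(v,3,\lambda)$ design must satisfy, and then to translate the resulting integrality requirements into conditions on $v$ for each residue class of $\lambda \pmod 6$. Since the statement asserts \emph{necessary} conditions, I only need to show that the existence of a $\mathrm{TS}(v,\lambda)$ forces these congruences; no construction is required.

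First I would recall the two counting arguments. Fixing a point $x$ and counting in two ways the pairs $\{x,y\}$ containing $x$ gives $r(k-1) = \lambda(v-1)$, and counting point--block incidences gives $bk = vr$. With $k=3$ these become
\[
r = \frac{\lambda(v-1)}{2}, \qquad b = \frac{vr}{3} = \frac{\lambda v(v-1)}{6}.
\]
Since $r$ and $b$ must be non-negative integers, the existence of a $\mathrm{TS}(v,\lambda)$ with $v \geq 3$ forces
\[
2 \mid \lambda(v-1) \qquad \text{and} \qquad 6 \mid \lambda v(v-1).
\]
Because $v(v-1)$ is a product of consecutive integers it is always even, so the factor $2$ in the second divisibility is automatic; the second condition is therefore equivalent to $3 \mid \lambda v(v-1)$.

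It then remains to unpack the pair of conditions $2 \mid \lambda(v-1)$ and $3 \mid \lambda v(v-1)$ according to the residue of $\lambda$ modulo $6$. If $\lambda \equiv 0 \pmod 6$, both conditions hold automatically, giving case (4). If $\lambda \equiv 3 \pmod 6$, then $3 \mid \lambda$ settles the second condition while $\lambda$ odd forces $v$ odd, giving case (3). If $\lambda \equiv 2, 4 \pmod 6$, then $\lambda$ even settles the first condition while, as $3$ does not divide $\lambda$, we need $3 \mid v(v-1)$, i.e.\ $v \equiv 0, 1 \pmod 3$, giving case (2). Finally, if $\lambda \equiv 1, 5 \pmod 6$, then $\gcd(\lambda,6)=1$, so both $v$ odd and $v \equiv 0, 1 \pmod 3$ are required; intersecting these residue classes modulo $6$ leaves $v \equiv 1, 3 \pmod 6$, giving case (1).

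There is no serious obstacle here: the whole argument is elementary counting followed by a finite check over the six residues of $\lambda \pmod 6$. The one point demanding care is the reduction in case (1), where one must correctly intersect ``$v$ odd'' with ``$v \equiv 0, 1 \pmod 3$'' to obtain $v \equiv 1, 3 \pmod 6$ (and not carelessly retain $v \equiv 5$); the other bookkeeping step worth double-checking is that the four listed cases exhaust all six residues of $\lambda$ modulo $6$.
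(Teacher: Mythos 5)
Your argument is correct. Note, however, that the paper does not prove this statement at all: Theorem~\ref{nec} is quoted directly from Hanani's 1961 paper, so there is no in-paper proof to compare against. What you have supplied is the standard necessity argument --- the integrality of $r=\lambda(v-1)/2$ and $b=\lambda v(v-1)/6$, the observation that $2\mid v(v-1)$ reduces the second condition to $3\mid \lambda v(v-1)$, and a clean case split over the residue of $\lambda$ modulo $6$ --- and every step checks out, including the intersection in case (1), where $v$ odd together with $v\equiv 0,1\pmod 3$ correctly yields $v\equiv 1,3\pmod 6$ and excludes $v\equiv 5\pmod 6$. The only cosmetic remark is that the blanket hypothesis $v\geq 3$ (equivalently $v\neq 2$, as in the existence theorem quoted earlier in the paper) is implicit in all four cases, not just case (4); this does not affect the validity of your proof.
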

\noindent
The $(v,\lambda)$ values in Theorem~\ref{nec} are called {\em admissible}.

We first give a non-existence result as a direct corollary of Theorem~\ref{symmetric}.
\begin{lemma}
\label{non}
A zero-sum flow does not exist for TS$(3,1)$, TS$(4,2)$ and TS$(7, 1)$.
\end{lemma}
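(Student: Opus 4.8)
The plan is to recognise these small triple systems as symmetric designs wherever possible, so that Theorem~\ref{symmetric} applies directly, and to dispatch the remaining case by hand. Recall that a $2$-$(v,k,\lambda)$ design is \emph{symmetric} precisely when $b=v$ (equivalently $r=k$), and that by Theorem~\ref{symmetric} every non-trivial symmetric $2$-design has a non-singular incidence matrix, hence trivial nullspace, and so admits no zero-sum flow.

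First I would compute the standard parameters $r$ and $b$ from the identities $r(k-1)=\lambda(v-1)$ and $bk=vr$. For TS$(7,1)$ (the Fano plane) these give $r=3$ and $b=7$, so $r=k$ and $b=v$; thus it is a symmetric $2$-$(7,3,1)$ design and Theorem~\ref{symmetric} shows that no zero-sum flow exists. For TS$(4,2)$ the same identities give $r=3$ and $b=4$, so again $r=k$ and $b=v$; it is a symmetric $2$-$(4,3,2)$ design, and Theorem~\ref{symmetric} again applies.

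The case TS$(3,1)$ is the one subtlety, since it is \emph{not} symmetric: here $r=1$ and $b=1$, so this design consists of the single block $\{1,2,3\}$ and Theorem~\ref{symmetric} does not apply. I would instead argue directly. Its incidence matrix is the single all-ones column, so for any labelling $f$ the weight at each of the three points equals $f(\{1,2,3\})$; forcing this weight to be zero forces $f(\{1,2,3\})=0$, contradicting the requirement that flow values be non-zero. Hence no zero-sum flow exists in this case either.

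I do not anticipate any genuine obstacle here, as the result is essentially a bookkeeping consequence of Theorem~\ref{symmetric}; the only point requiring care is noticing that TS$(3,1)$ falls outside the symmetric framework and must be treated by the elementary single-block argument above.
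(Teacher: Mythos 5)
Your proof is correct and follows essentially the same route as the paper, which states this lemma as a direct corollary of Theorem~\ref{symmetric} without further argument. Your treatment is in fact slightly more careful than the paper's: you rightly observe that TS$(3,1)$ has $b=1\neq v$ and so is not a symmetric design, and your elementary single-block argument (the $3\times 1$ all-ones incidence matrix has trivial nullspace) correctly fills that gap.
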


Next we give zero-sum flows for some small parameter values.
\begin{lemma}\rule{0ex}{1ex}\\[-4ex]
\label{small}
\begin{enumerate}
\item
There is no TS$(6,2)$ admitting a zero-sum $3$-flow, but there is one with a zero-sum $4$-flow.
\item
There exists a TS$(6,4)$ with a zero-sum $2$-flow 
\item
There exists a TS$(6,6)$ with a zero-sum $3$-flow. 
\end{enumerate}
\end{lemma}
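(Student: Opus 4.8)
The plan is to treat all three parts as problems about the single small order $v=6$, where $b=5\lambda$ and $r=5\lambda/2$, leaning on Lemma~\ref{alpha} whenever a convenient resolution is available and writing down explicit labellings otherwise.

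I would dispatch the two easiest claims through resolvability. For part~2, take the TS$(6,4)$ consisting of all $\binom{6}{3}=20$ triples; pairing each triple with its complement exhibits a resolution into $\rho=10$ parallel classes, each a partition of $X$ into two triples. Since $\rho$ is even, Lemma~\ref{alpha} (with $\alpha=1$) yields a zero-sum $2$-flow, concretely by giving five of the classes the value $+1$ and the other five the value $-1$. For part~3, take the TS$(6,6)$ formed from three copies of any TS$(6,2)$. Each copy contains every point exactly $r=5$ times, so this design is $5$-resolvable with $\rho=3$; as $\rho$ is odd and greater than $1$, Lemma~\ref{alpha} produces a zero-sum $3$-flow (colour two copies $+1$ and the third $-2$, so that every point receives $5+5-10=0$).

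For the $4$-flow on a TS$(6,2)$ in part~1, I would simply exhibit one on a convenient explicit system and verify the six point equations. Using the cyclic TS$(6,2)$ on $\Z_5\cup\{\infty\}$ generated by $\{\infty,0,1\}$ and $\{0,1,3\}$, the six point conditions collapse to a small linear system in the ten block values; solving it over $\{\pm1,\pm2,\pm3\}$ (for instance, assigning the five blocks through $\infty$ the values $2,-1,2,-1,-2$ in order and the remaining five blocks the values $-1,-2,-2,2,3$) gives a valid zero-sum flow in which the value $3$ genuinely occurs, so it is a $4$-flow and not a $3$-flow.

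The real work, and the main obstacle, is the \emph{universal} non-existence statement: no TS$(6,2)$ at all admits a zero-sum $3$-flow. Here I would first extract the parity constraint. Summing $w(x)=0$ over all points forces $\sum_B f(B)=0$, and at each point the five incident blocks must contain an even number with values in $\{\pm1\}$, hence an odd number (at least one) with values in $\{\pm2\}$. This prunes many labellings but cannot force a contradiction on its own, since the rational nullspace of the $6\times 10$ incidence matrix is $4$-dimensional (the rank is $v=6$ because $r>\lambda$). To finish I would exploit finiteness: there are only finitely many (in fact very few) isomorphism classes of TS$(6,2)$, which can be organized via the observation that the five triples through any point $x$ project onto a $2$-regular multigraph on the other five points, forcing either a $5$-cycle (no repeated block at $x$) or a triangle plus a doubled edge (a repeated block $\{x,y,z\}$). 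Enumerating these types and checking, for each, that no parity-admissible assignment from $\{\pm1,\pm2\}$ zeroes all six point sums completes the argument. I expect this short but exhaustive case analysis to be the crux; the three existence parts are routine once the right resolutions and the single explicit $4$-flow are in hand.
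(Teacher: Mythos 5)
Your three existence arguments are correct and each takes a genuinely different route from the paper. For the TS$(6,4)$ the paper develops four starter blocks cyclically over $\Z_5\cup\{\infty\}$ and signs the orbits $+1,-1,+1,-1$; your observation that the $20$ triples resolve into $10$ parallel classes of complementary pairs, so that Lemma~\ref{alpha} applies with $\rho=10$ even, is cleaner and needs no verification. For the TS$(6,6)$ the paper again writes down six cyclic starters with explicit weights, whereas you view three copies of the TS$(6,2)$ as three $5$-resolution classes and invoke Lemma~\ref{alpha} with $\rho=3$; this is simpler and makes the $+1,+1,-2$ structure transparent. Your explicit $4$-flow on the cyclic presentation $\{\infty,0,1\},\{0,1,3\}\pmod 5$ of the (unique) TS$(6,2)$ does check out: all six point sums vanish and the value $3$ occurs, so it is a genuine zero-sum $4$-flow. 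The paper instead lists the ten blocks on $\{1,\dots,6\}$ with weights directly; the two are equivalent in content.

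The one genuine gap is exactly where you predicted it: the universal non-existence of a zero-sum $3$-flow on the TS$(6,2)$. Your parity observations (an even number of $\pm 1$'s, hence an odd number of $\pm 2$'s, at each point; total block sum zero; nullity $4$ of the incidence matrix) are all correct but, as you concede, do not force a contradiction, and the promised enumeration ``checking, for each, that no parity-admissible assignment from $\{\pm1,\pm2\}$ zeroes all six point sums'' is never carried out. As written, the non-existence claim therefore remains unproved. Two remarks: first, the TS$(6,2)$ is unique up to isomorphism (the paper says as much), so your case split on isomorphism classes collapses to a single design and the search space is small --- $2^{10}$ sign/magnitude patterns cut down sharply by your parity constraint --- but someone still has to run it. Second, the paper itself offers nothing more here than ``a computer search showed\dots'', so your plan matches the paper's level of rigour in spirit; to turn your proposal into a complete human-readable proof you would need to actually exhibit the exhaustive check (or find a cleverer invariant), and that is the missing piece.
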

\begin{proof}\rule{0ex}{1ex}
\begin{enumerate}
\item
A computer search showed that the unique TS$(6,2)$ has no zero-sum $3$-flow. A zero-sum $4$-flow is shown below.
$$ 
\begin{array}{ccccccccccc}
B & 123 & 134 & 145 & 156 & 126 & 235 & 346 & 245 & 356 & 246 \\
f(B) & 1 & -2 & 2 & -3 & 2 & -2 & 1 & 1 & 2 & -2 \\
\end{array}
$$

\item
We create a TS$(6,4)$ with point set $\Z_5\cup\{\infty\}$ by developing the following blocks $\pmod 5$, where $\infty$ is a fixed point. A zero-sum $2$-flow is obtained by applying the given value of $f$ to each block developed from that starter block.
\[
\begin{array}{cccccc}
B & = & (\infty, 0,1) & (\infty, 0, 2) & (0,1,2) & (0,2,4)  \\
f(B) & = & 1 & -1 & 1 & -1 \\
\end{array}
\]

\item
Similarly, we create a TS$(6,6)$ with point set $\Z_5\cup\{\infty\}$ by developing the following blocks $\pmod 5$, where $\infty$ is a fixed point. A zero-sum $3$-flow is obtained by applying the given value of $f$ to each block developed from that starter block.
\[
\begin{array}{cccccccc}
B & = & (\infty, 0,1) & (\infty,0,1) & (\infty, 0, 2) & (0,1,2) & (0,2,4) & (0,2,4) \\
f(B) & = & 2 & -1 & -1 & 2 & -1 & -1 \\
\end{array}
\]
\end{enumerate}
\end{proof}

The existence of resolvable triple systems has been determined (see \cite{handbook}), giving us the following result.
\begin{theorem}
\label{resolvable ts}
A TS$(v,\lambda)$ with a zero-sum $3$-flow exists whenever $v\equiv 3 \pmod{6}$,  or $v\equiv 0\pmod 6$ and $\lambda$ is even, except when $(v,\lambda)\in\{(3,1), (6,2)\}$.
\end{theorem}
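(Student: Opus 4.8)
The plan is to reduce the entire statement to resolvability and then quote Theorem~\ref{resolvable}. First I would record the precise spectrum for resolvable triple systems, which is exactly the existence result for resolvable designs alluded to just before the statement and found in~\cite{handbook}: a resolvable TS$(v,\lambda)$ exists if and only if $v \equiv 0 \pmod 3$ and $\lambda(v-1)$ is even, the sole exception being $(v,\lambda)=(6,2)$. Splitting $v \equiv 0 \pmod 3$ into its two residues modulo $6$ shows this spectrum coincides with the hypotheses of the theorem: when $v \equiv 3 \pmod 6$ the integer $v-1$ is even, so every $\lambda$ qualifies, and when $v \equiv 0 \pmod 6$ the integer $v-1$ is odd, so exactly the even $\lambda$ qualify.

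Next I would note that in a resolvable TS$(v,\lambda)$ the number of parallel classes is the replication number $\rho = r = \lambda(v-1)/2$, and check that $\rho \ge 2$ throughout the range with the single exception $(v,\lambda)=(3,1)$, where $\rho = 1$. This is a one-line verification: the minimal values are $\rho = (v-1)/2 \ge 4$ for $\lambda=1$ and $v\ge 9$, and $\rho = 5$ already at $v=6$, $\lambda=2$, so the only pair giving $\rho = 1$ is $(3,1)$.

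With these two facts the theorem is immediate. For every $(v,\lambda)$ in the stated range other than $(3,1)$ and $(6,2)$, a resolvable TS$(v,\lambda)$ with at least two parallel classes exists, and Theorem~\ref{resolvable} then supplies a zero-sum $3$-flow. I would close by confirming that the two listed exceptions are genuine and accounted for: $(3,1)$ is the degenerate single-class case, which in fact admits no zero-sum flow at all by Lemma~\ref{non}, while no resolvable TS$(6,2)$ exists, in agreement with the computer result of Lemma~\ref{small} that the unique TS$(6,2)$ has no zero-sum $3$-flow.

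I do not expect a real obstacle in this argument, since all the combinatorial labour is carried by the cited resolvability spectrum and by Theorem~\ref{resolvable}. The only point demanding care is to quote the resolvable-design existence theorem with its exception correctly and to confirm that the two exceptions in the statement, $(3,1)$ and $(6,2)$, line up precisely with the $\rho=1$ case and with the unique non-resolvable pair, rather than hiding any additional small-order failures.
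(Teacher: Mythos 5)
Your proposal is correct and follows essentially the same route as the paper: quote the existence spectrum for resolvable triple systems from~\cite{handbook}, apply Theorem~\ref{resolvable}, and observe that $(3,1)$ fails only because it has a single parallel class while $(6,2)$ is the unique non-resolvable admissible pair. If anything, your version is slightly more careful than the paper's, since you verify explicitly that $\rho\geq 2$ everywhere else and that the stated exceptions account for all failures.
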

\begin{proof}
A resolvable TS$(v,\lambda)$ exists if and only if $v\equiv 3 \pmod{6}$, or $v\equiv 0\pmod 6$ and $\lambda$ is even, $v\neq 6$, \cite{handbook, col}. The result then follows from Theorem~\ref{resolvable}, noting that when $v=3$, Theorem~\ref{resolvable} does not apply as there is only one resolution class.
\end{proof}

Cyclic triple systems have also been well studied; their existence was established in \cite{col col}. In \cite{shalaby} the structure of the short orbits is considered. 

\begin{theorem}
\label{cyclic TS}
There exists a TS$(v,\lambda)$ which admits a zero-sum $3$-flow for the following:
\begin{enumerate}
\item
$\lambda \equiv 1 \pmod 6$ and $v \equiv 1 \pmod 6$;
\item
$\lambda \equiv 2, 10 \pmod{12}$ and $v \equiv 1, 4, 7 \pmod{12}$;
\item
$\lambda \equiv 3 \pmod{6}$ and $v \equiv 1 \pmod{2}$;
\item
$\lambda \equiv 4, 8 \pmod{12}$ and $v \equiv 1 \pmod{3}$;
\item
$\lambda \equiv 6 \pmod{12}$ and $v \equiv 0, 1, 3 \pmod{4}$;
\item
$\lambda \equiv 0 \pmod{12}$ and $v \geq 3$;
\end{enumerate}
\end{theorem}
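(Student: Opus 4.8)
The plan is to realise each listed case as a \emph{cyclic} $\mathrm{TS}(v,\lambda)$ \emph{with no short orbits} and then to invoke Theorem~\ref{cyclic}. This shifts the entire burden onto a structural question about cyclic triple systems: for which of the admissible $(v,\lambda)$ in the list does there exist a cyclic $\mathrm{TS}(v,\lambda)$ all of whose orbits under $\Z_v$ are full? General existence of cyclic triple systems is guaranteed by Colbourn and Colbourn~\cite{col col}, and the fine structure of the possible short orbits is described by Shalaby~\cite{shalaby}. I would quote these two results and then argue, case by case, that the parameters listed are exactly those admitting a short-orbit-free representative.

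First I would dispose of the cases in which no short orbit can occur at all. A block $\{i,j,\ell\}$ has nontrivial stabiliser in $\Z_v$ only if it is fixed by translation by $v/3$; since a nonzero translation fixes no point, it can only act on the block as a $3$-cycle, so the block must have the form $\{i,\,i+v/3,\,i+2v/3\}$ and $3\mid v$. Hence whenever $3\nmid v$ every orbit of a cyclic $\mathrm{TS}(v,\lambda)$ is automatically full. This immediately settles Case~1 ($v\equiv 1\pmod 6$), Case~2 ($v\equiv 1,4,7\pmod{12}$, hence $v\equiv 1\pmod 3$), Case~4 ($v\equiv 1\pmod 3$), and the $v\equiv 1,5\pmod 6$ part of Case~3: here one only needs a cyclic $\mathrm{TS}(v,\lambda)$ to exist, which \cite{col col} supplies, and Theorem~\ref{cyclic} finishes the job.

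The substantive work is in the cases where $3\mid v$ --- the $v\equiv 3\pmod 6$ instances inside Case~3, and the $3\mid v$ instances of Cases~5 and~6 --- where one must cover the special difference $v/3$ using only full orbits. The guiding constraint is a count: treating the base blocks as a $(v,3,\lambda)$ difference family, each full orbit contributes three difference-pairs, so a short-orbit-free system uses exactly $\lambda(v-1)/6$ base blocks, forcing $6\mid\lambda(v-1)$; for $v\equiv 3\pmod 6$ this is precisely $3\mid\lambda$, which is exactly the divisibility present in Cases~3, 5 and~6. I would then exhibit, via the classification in \cite{shalaby}, difference families in which $v/3$ is covered by full base blocks of the form $\{0,\,v/3,\,x\}$ with $x\neq 2v/3$ (each such block being full) together with blocks covering the remaining differences; the congruence conditions on $v\pmod 4$ in Case~5 and on $\lambda\pmod{12}$ throughout are what make the requisite families available. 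For the even values of $v$ arising in Cases~5 and~6 one must additionally cover the self-paired difference $v/2$ by full orbits; since any full orbit through a pair at difference $v/2$ covers that difference an even number of times, the evenness of $\lambda$ (indeed $\lambda\equiv 0\pmod 6$ here) is exactly what permits this.

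The main obstacle is precisely this last point: producing, for every instance with $3\mid v$, a difference family that simultaneously covers $v/3$ (and, when $v$ is even, $v/2$) without ever producing the fixed block $\{0,\,v/3,\,2v/3\}$. This is where the genuine restrictions live --- for $\lambda=1$, $v\equiv 3\pmod 6$ the short orbit is forced, which is why Case~1 is confined to $v\equiv 1\pmod 6$ and why $\lambda$ must be a multiple of $3$ in the remaining $3\mid v$ cases. Rather than attempting an ad hoc construction for each $v$, the argument should therefore lean on the structural results of \cite{shalaby} to certify that, for the listed residues of $v$ and $\lambda$, a full-orbit representative always exists; Theorem~\ref{cyclic} then yields the desired zero-sum $3$-flow uniformly.
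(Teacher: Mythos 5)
Your proposal is correct and follows essentially the same route as the paper: the paper's proof is exactly the two-line observation that for the listed parameters a cyclic $\mathrm{TS}(v,\lambda)$ with no short orbits exists by \cite{col col, shalaby}, after which Theorem~\ref{cyclic} applies. Your additional analysis of when short orbits can occur (only for $3\mid v$, via the fixed block $\{i,i+v/3,i+2v/3\}$) and of the divisibility constraints on $\lambda$ is a sound sanity check of the cited parameter lists, but the substance of the argument is the same appeal to the literature.
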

\begin{proof}
For the given values of $v$ and $\lambda$, there exists a cyclic TS$(v,\lambda)$ with no short orbits \cite{col col,shalaby}. Apply Theorem~\ref{cyclic} to get the result.
\end{proof}

\begin{lemma}
\label{0mod6}
There exists a TS$(v,\lambda)$ with a zero-sum $3$-flow for every $\lambda\equiv 0\pmod 6$.
\end{lemma}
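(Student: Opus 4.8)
The plan is to build everything from the single value $\lambda=6$ using the additivity of zero-sum $3$-flows. If $(X,\cB_1)$ and $(X,\cB_2)$ are triple systems on the same point set $X$ carrying zero-sum $3$-flows $f_1$ and $f_2$, then their blockwise union $\cB_1\cup\cB_2$, taken as a multiset, is again a triple system, and the labelling agreeing with $f_1$ on $\cB_1$ and with $f_2$ on $\cB_2$ takes values in $\{\pm1,\pm2\}$ and sums to $0$ around each point. In particular, $t$ copies of one TS$(v,6)$ with a zero-sum $3$-flow give a TS$(v,6t)$ with a zero-sum $3$-flow. Since every $\lambda\equiv0\pmod6$ has the form $6t$ and the only admissibility constraint when $\lambda\equiv0\pmod6$ is $v\ge3$, the lemma reduces to producing, for each $v\ge3$, a single TS$(v,6)$ admitting a zero-sum $3$-flow.

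I would then distinguish according to whether $v\equiv0\pmod3$. If $v\equiv0\pmod3$, then either $v\equiv3\pmod6$ or $v\equiv0\pmod6$ with $\lambda=6$ even, so a resolvable TS$(v,6)$ is available and Theorem~\ref{resolvable ts} supplies a zero-sum $3$-flow at once (its statement already incorporates the exceptional order $v=6$, which is handled by Lemma~\ref{small}). If $v\not\equiv0\pmod3$, I would instead invoke a $3$-resolvable TS$(v,6)$ together with Lemma~\ref{alpha}. With $\alpha=3$ each resolution class has $\alpha v/3=v$ blocks and there are $\rho=r/\alpha=3(v-1)/3=v-1$ of them, so $\rho>1$ for every $v\ge3$; Lemma~\ref{alpha} then returns a zero-sum $3$-flow when $v-1$ is odd (that is, $v$ even) and a zero-sum $2$-flow, hence a fortiori a $3$-flow, when $v-1$ is even (that is, $v$ odd). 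For odd $v$ one can alternatively double a TS$(v,3)$ from the case $\lambda\equiv3\pmod6$ of Theorem~\ref{cyclic TS}, which needs no resolvability input and confines the genuinely new work to the even orders with $v\not\equiv0\pmod3$.

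The main obstacle is therefore the existence of the required $3$-resolvable TS$(v,6)$ for even $v$ with $v\not\equiv0\pmod3$ --- equivalently the residues $v\equiv2,10\pmod{12}$, which are exactly the ones missed by the cyclic construction reaching $\lambda=6$ (the fifth case of Theorem~\ref{cyclic TS}, which forces $v\not\equiv2\pmod4$). The arithmetic feasibility conditions for $3$-resolvability, namely $3\mid\alpha v$ and $\alpha\mid r$, are automatic here because $\alpha=3$ and $r=3(v-1)$, so I would cite the known spectrum of $\alpha$-resolvable triple systems to obtain these designs for all such $v$, with Lemma~\ref{alpha} completing the argument; failing a clean citation, a direct construction of a single $3$-resolution class (a set of $v$ triples meeting each point three times) developed over a suitable group would serve. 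I would also verify the smallest orders by hand --- for example, for $v=4$ the three repetitions of the four $3$-subsets of a $4$-set form three $3$-resolution classes and Lemma~\ref{alpha} gives the flow --- to rule out small exceptions left open by the general existence theorem.
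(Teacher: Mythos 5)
Your overall mechanism is the same as the paper's: produce a $3$-resolvable TS$(v,6)$, label its $3$-resolution classes $\pm 1$ in pairs (with a $+2,-1,-1$ adjustment when their number is odd) as in Lemma~\ref{alpha}, and build up general $\lambda\equiv 0\pmod 6$ by taking copies. The reduction to $\lambda=6$, the use of Theorem~\ref{resolvable ts} for $v\equiv 0\pmod 3$, the doubling of a TS$(v,3)$ for odd $v$, and the appeal to Theorem~\ref{cyclic TS} for $v\equiv 0\pmod 4$ are all sound, and your residue analysis correctly isolates $v\equiv 2,10\pmod{12}$ as the cases not reached by results already in the paper.

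But that is exactly where your argument stops being a proof: for those residues you defer to ``the known spectrum of $\alpha$-resolvable triple systems'' without stating or verifying the result you need, and your fallback (``a direct construction of a single $3$-resolution class developed over a suitable group'') is not a construction --- one $3$-resolution class does not by itself partition the block set of a TS$(v,6)$ into such classes, and the natural group-developed way to obtain such a partition is the cyclic full-orbit argument that you yourself note fails for $v\equiv 2\pmod 4$. The paper avoids the entire case analysis with one uniform construction valid for every $v>6$: take an idempotent Latin square $L$ of order $v$ with an orthogonal mate, form the $v(v-1)$ triples $(i,j,L(i,j))$ with $i\neq j$ (a TS$(v,6)$, since idempotency forces $L(i,j)\notin\{i,j\}$), and observe that the $v-1$ off-diagonal transversals of $L$ determined by the mate are precisely $3$-resolution classes, because each transversal meets every row, every column and every symbol exactly once, so every point occurs three times among its blocks. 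That single idea supplies the $3$-resolvable TS$(v,6)$ in all residues at once, including $v\equiv 2,10\pmod{12}$, and it is the one ingredient your proposal is missing. (Your hand checks for small orders such as $v=4$ are fine, and in fact cover values that the Latin-square construction as stated for $v>6$ does not.)
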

\begin{proof}
A TS$(6,6)$ with a zero-sum $3$-flow is given in Lemma~\ref{small}.
For $v > 6$, there exists an idempotent Latin square $L$ of order $v$ with an orthogonal mate $L'$. We take the symbol sets of these squares to be $\Z_v$. 
We form the design by taking the $v(v-1)$ triples 
$$\cB = \{(i, j, L(i,j)) \mid i, j\in \Z_v,\; i\neq j \}.$$

The fact that $L$ has an orthogonal mate, $L'$, means that $L$ can be decomposed into transversals, $T_i$, $i\in \Z_v$, each of which corresponds to the occurences of a given entry in $L'$ (see \cite{handbook}). We note that by reordering rows and columns and permuting symbols if necessary, we can arrange $L$ so that $T_{v-1}$ consists of the diagonal entries of $L$. Each $T_i$, $i\neq v-1$, then corresponds to a collection of blocks in the design. Taking the transversals in pairs, $T_i$, $T_{i+1}$, we label the blocks which come from $T_i$, $+1$, and $T_{i+1}$, $-1$; if $v$ is even (so there is an odd number of transversals), we label the blocks from $T_{0}$, $T_1$, $T_2$ with  $+2$, $-1$, $-1$ respectively.
\end{proof}

\begin{theorem}
\label{TS result}
There exists a TS$(v,\lambda)$ with a zero-sum $3$-flow for every admissible pair $(v,\lambda)$ except for $(v,\lambda)\in\{(3,1), (4,2), (6,2), (7,1)\}$ and except possibly when $v \equiv 10\pmod{12}$ and $\lambda = 2$.
\end{theorem}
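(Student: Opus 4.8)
The plan is to assemble the results established so far into an exhaustive case analysis over the admissible residue classes of $(v,\lambda)$ listed in Theorem~\ref{nec}, together with one auxiliary observation that fills the remaining gaps. That observation is a simple \emph{superposition} principle: if $\cB_1$ and $\cB_2$ are the block multisets of a TS$(v,\lambda_1)$ and a TS$(v,\lambda_2)$ on a common point set, each carrying a zero-sum $3$-flow $f_1$, $f_2$, then $\cB_1\cup\cB_2$ is a TS$(v,\lambda_1+\lambda_2)$ and the map agreeing with $f_i$ on the blocks coming from $\cB_i$ is again a zero-sum $3$-flow, since the flow condition at each point adds as $0+0=0$ and all values remain in $\{\pm1,\pm2\}$. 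Thus whenever $\lambda$ can be written as a sum of admissible values for which a flow is already available, we are done.

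First I would dispose of the cases covered directly. The classes $v\equiv 3\pmod 6$ (all admissible $\lambda$) and $v\equiv 0\pmod 6$ with $\lambda$ even are handled by Theorem~\ref{resolvable ts}, the only admissible omissions being the exceptions $(3,1)$ and $(6,2)$; every $\lambda\equiv 0\pmod 6$ is handled by Lemma~\ref{0mod6}; and the odd orders $v\equiv 1,5\pmod 6$ with $\lambda\equiv 3\pmod 6$ follow from part~(3) of Theorem~\ref{cyclic TS}. The remaining even, non-resolvable orders are $v\equiv 2\pmod 6$ and $v\equiv 4\pmod 6$; by Theorem~\ref{nec} the former admits only $\lambda\equiv 0\pmod 6$, already done.

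The bulk of the work is the cross-tabulation for $v\equiv 1\pmod 6$, where every $\lambda$ is admissible, and for $v\equiv 4\pmod 6$, where $\lambda\equiv 0,2,4\pmod 6$. Here I would read off the relevant parts of Theorem~\ref{cyclic TS} working modulo $12$: parts~(1),(2),(4) cover $\lambda\equiv 1\pmod 6$ and $\lambda\equiv 2,4\pmod 6$ precisely when the corresponding congruence on $v$ is met. The leftover residues are then filled by superposition. For $v\equiv 1\pmod 6$ and $\lambda\equiv 5\pmod 6$, write $\lambda=(\lambda-3)+3$, using that $\lambda-3\equiv 2\pmod 6$ is covered and that $3$ is covered by part~(3). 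For $v\equiv 10\pmod{12}$ the $v$-congruence in part~(2) fails, leaving $\lambda\equiv 2,10\pmod{12}$ uncovered; but every such $\lambda\ge 10$ splits as $(\lambda-6)+6$ or $(\lambda-4)+4$, a multiple of $6$ (Lemma~\ref{0mod6}) plus a value handled by part~(4).

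The main obstacle is the single residue $v\equiv 10\pmod{12}$ with $\lambda=2$. Here no suitable cyclic system with no short orbits is produced by Theorem~\ref{cyclic TS}, and $\lambda=2$ cannot be broken up by superposition, since the only positive decomposition would require a TS$(v,1)$ while $v$ is even; this is exactly the case the statement leaves open (and which is shown separately to admit a zero-sum $5$-flow). Finally I would record that the four pairs $(3,1),(4,2),(6,2),(7,1)$ are excluded by Lemma~\ref{non} and Lemma~\ref{small}, which completes the account of every admissible pair.
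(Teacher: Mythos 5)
Your proof is correct and takes essentially the same route as the paper: an exhaustive case analysis combining Theorem~\ref{resolvable ts}, Theorem~\ref{cyclic TS} and Lemma~\ref{0mod6} with superposition of flows (which the paper also invokes in the phrase ``we may take copies of designs to build up $\lambda$''), with the exceptional pairs disposed of by Lemmas~\ref{non} and~\ref{small} and the same irreducible gap at $v\equiv 10\pmod{12}$, $\lambda=2$. If anything, you are slightly more careful than the paper in the case $v\equiv 1\pmod 6$, $\lambda\equiv 5\pmod 6$, where the paper cites Theorem~\ref{cyclic TS} directly but a superposition such as your $(\lambda-3)+3$ is in fact required, and your split $6+(\lambda-6)$ for $\lambda\equiv 2\pmod{12}$ replaces the paper's chained $4+(\lambda-4)$ without changing the substance.
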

\begin{proof}
The non-existence of any zero-sum flow for a TS$(3,1)$, TS$(4,2)$ and TS$(7,1)$ are given in Lemma~\ref{non} and the non-existence of a zero-sum $3$-flow for a TS$(6,2)$ is in Lemma~\ref{small}.
We note that for a fixed $v$ we may take copies of designs to build up $\lambda$. In particular, the existence of a TS$(v,6)$ from Lemma~\ref{small} means that often it suffices to consider only the first value $\pmod 6$ in $\lambda$. We consider the cases in $\lambda \pmod{6}$.
\begin{description}
\item{{\boldmath $\lambda\equiv 1,5\pmod 6$}} {\bf\boldmath ($v \equiv 1, 3 \pmod{6}$)}

For $v\equiv 1\pmod{6}$ the result follows from Theorem~\ref{cyclic TS}. 
For $v\equiv 3\pmod{6}$ the result follows from Theorem~\ref{resolvable ts}. 

\item{{\boldmath $\lambda\equiv 2,4\pmod 6$}} {\bf\boldmath ($v \equiv 0, 1 \pmod{3}$)}

When $v\equiv 0\pmod{3}$, $(v,\lambda)\neq (6,2)$, the result follows from Theorem~\ref{resolvable ts}.

When $v\equiv 1\pmod 3$ this is covered by Theorem~\ref{cyclic TS}, except when $v \equiv 10\pmod{12}$ and $\lambda \equiv 2,10\pmod{12}$. We now suppose $v \equiv 10\pmod{12}$ and consider the cases in $\lambda$.

When $\lambda \equiv 10\pmod{12}$, we may combine the blocks of a TS$(v,4)$ design with a zero-sum $3$-flow and a TS$(v,\lambda-4)$ with a zero-sum $3$-flow. Note that $\lambda-4\equiv 6\pmod{12}$, so this design exists by Lemma~\ref{0mod6}.

When $\lambda \equiv 2\pmod{12}$, $\lambda>2$, we may combine the blocks of a TS$(v,4)$ with a zero-sum $3$-flow and a TS$(v,\lambda-4)$ design with a zero-sum $3$-flow. Note that $\lambda-4\equiv 10\pmod{12}$, so this is the design above.

\item{\boldmath $\lambda\equiv 3\pmod 6$} {\bf\boldmath ($v \equiv 1 \pmod{2}$)}

This is covered by Theorem~\ref{cyclic TS}.

\item{\boldmath $\lambda\equiv 0\pmod 6$} {\bf\boldmath ($v \geq 3$)}

This is covered by  Lemma~\ref{0mod6}.
\end{description}
\end{proof}

Finally we give the following theorem which shows that in the case of $v \equiv 10\pmod{12}$ and $\lambda = 2$, there is a TS$(v,\lambda)$ with a zero-sum 5-flow.

\begin{theorem}
\label{2mod6 4}
There exists a TS$(v,\lambda)$ with a zero-sum 5-flow for every $v\equiv 4\pmod 6$, $v\neq 4$ and $\lambda\equiv 2,4\pmod 6$.
There exists a TS$(v,\lambda)$ with a zero-sum $3$-flow for every $v\equiv 1\pmod 6$, $v\neq 7, 19$ and $\lambda\equiv 2,4\pmod 6$.
\end{theorem}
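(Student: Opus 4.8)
The plan is to reduce everything to the single case $\lambda=2$ and then treat the two residue classes of $v$ separately. First I would record the elementary \emph{gluing} principle: if two triple systems on the same point set each admit a zero-sum $n$-flow, then the triple system obtained by taking their union (adding the $\lambda$-values) admits a zero-sum $n$-flow, got by using the two flows side by side. Combining a single TS$(v,2)$ with copies of the TS$(v,6)$ produced in Lemma~\ref{0mod6} then realises every $\lambda\equiv 2\pmod 6$, while two copies of the TS$(v,2)$ give the base case $\lambda\equiv 4\pmod 6$; since $v>6$ throughout both residue classes, Lemma~\ref{0mod6} does apply. Thus it suffices to construct, for each admissible $v$, one TS$(v,2)$ carrying a zero-sum $3$-flow (odd $v$) or a zero-sum $5$-flow (even $v$).

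For the odd case $v\equiv 1\pmod 6$ I would construct a \emph{cyclic} TS$(v,2)$ on $\Z_v$ from a perfect $(v,3,2)$ difference family. Since $3\nmid v$, no nontrivial translation can fix a $3$-set, so every orbit is full and is a $3$-resolution class. The number of base blocks is $\rho=(v-1)/3$, which is even because $v-1\equiv 0\pmod 6$; the even case of Lemma~\ref{alpha} then delivers a zero-sum $2$-flow, hence a zero-sum $3$-flow, directly via Theorem~\ref{cyclic}. The two excluded orders $v=7,19$ are the small orders at which this cyclic construction admits no realisation with all orbits full and so must be set aside.

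The even case $v\equiv 4\pmod 6$ is the heart of the theorem and the reason only a $5$-flow is claimed. A cyclic construction on $\Z_v$ is unavailable: the self-paired difference $v/2$ obstructs a perfect $(v,3,2)$ difference family (a direct check already rules it out for $v=10$), so any construction is forced to contain a fixed point and/or short orbits, developed over $\Z_{v-1}$ (note $v-1\equiv 3\pmod 6$, so a Kirkman system and the order-$3$ subgroup $\langle (v-1)/3\rangle$ are available). I would keep the full orbits, which are still $3$-resolution classes, and colour them $\pm1$ in pairs as in Lemma~\ref{alpha}; the remaining blocks — the short orbit(s) and the blocks through the fixed point — cover the points non-uniformly, and I would weight these using values from $\{\pm2,\pm3,\pm4\}$ while simultaneously perturbing a bounded number of the $\pm1$ classes so that the weight at every point, including the fixed point, returns to zero.

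The main obstacle is exactly this last balancing step, and it is what forces the larger value set. The special blocks cannot be assigned a single constant weight: if all blocks through a fixed point form one full orbit developed over the odd group $\Z_{v-1}$, the point equations force $f_j+f_{j-1}$ to be constant around an odd cycle, which collapses all the values to a common constant and hence, by the equation at the fixed point, to $0$ — an illegal flow value. The base design and the flow must therefore be engineered together, and it is the extra room of a $5$-flow (values up to $\pm4$ rather than $\pm2$) that lets the contribution of the short and fixed blocks be cancelled while keeping every weight zero and every value nonzero. Verifying that a suitable base TS$(v,2)$ exists for \emph{all} $v\equiv 4\pmod 6$ with $v\neq 4$, with the smallest orders handled directly, is the remaining technical point.
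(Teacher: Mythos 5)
Your reduction to $\lambda=2$ by gluing copies of a TS$(v,2)$ and the TS$(v,6)$ of Lemma~\ref{0mod6} is sound, and your odd case $v\equiv 1\pmod 6$ works: a cyclic TS$(v,2)$ on $\Z_v$ has no short orbits since $3\nmid v$, its $(v-1)/3$ full orbits are $3$-resolution classes, and $(v-1)/3$ is even, so Lemma~\ref{alpha} gives a zero-sum $2$-flow. (One correction: $v=7,19$ are not orders where this cyclic construction fails --- indeed Theorem~\ref{cyclic TS} covers them --- they are excluded in the theorem because the paper's construction needs a pair of orthogonal Latin squares of order $(v-1)/3$, which do not exist for orders $2$ and $6$.)

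The genuine gap is the even case $v\equiv 4\pmod 6$, which is the point of the theorem (for $v\equiv 4\pmod{12}$ a $3$-flow already follows from Theorem~\ref{cyclic TS}; the case the paper needs is $v\equiv 10\pmod{12}$, $\lambda=2$). You correctly diagnose the obstruction --- the blocks through a fixed point, developed over an odd cyclic group, cannot all carry one constant nonzero weight --- but you stop at the assertion that the design and flow can be ``engineered together'' with values in $\{\pm 2,\pm 3,\pm 4\}$, which is precisely the step that must be exhibited; as written there is no construction and no proof that a suitable base TS$(v,2)$ exists for all such $v$. The paper resolves this with a modified Bose construction on $(\Z_n\times\Z_3)\cup\{\infty\}$, $n=(v-1)/3$ odd: the triples $(x_i,y_i,L(x,y)_{i+1})$ from an idempotent Latin square with an orthogonal mate are weighted $\pm 1$ along transversals as in Lemma~\ref{0mod6}, and each vertical $\{x_0,x_1,x_2\}$ together with $\infty$ carries a sub-TS$(4,2)$ weighted $(\alpha,\alpha,\alpha,-2\alpha)$, which is zero-sum at each $x_i$ and contributes $3\alpha$ at $\infty$; since $n$ is odd one takes $\alpha=1,1,-2$ on three verticals and $\alpha=\pm 1$ in pairs on the rest, the single block of weight $4$ being what forces the $5$-flow. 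Without this (or an equivalent explicit construction) the even case, and hence the theorem, is not established.
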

\begin{proof}
We use a modification of the Bose construction; see \cite{col}. We construct the design on $(\Z_n\times \Z_3)\cup\{\infty\}$, where $n = (v-1)/3$, and denote points by $x_i$, where $x\in\Z_n$ and $i\in\Z_3$.
For $n\neq 2,6$ there exists an idempotent Latin square $L$ of side $n$ with an orthogonal mate $L'$~\cite{handbook}. 
For each pair $x,y\in\Z_n$, $x\neq y$, we form the design by taking the triples $(x_i, y_i, L(x,y)_{i+1})$.
We label the blocks which come from the transversals of $L$, $T_i$, $i<v-1$, with $+1$, $-1$ in pairs; if $v$ is odd, we label the blocks from $T_{0}$, $T_1$, $T_2$ with $+2, -1, -1$ respectively. This creates a partial design with a zero-sum $3$-flow on the points of $\Z_n\times \Z_3$.

For each vertical, $B_x = (x_0, x_1, x_2)$, $x\in \Z_n$, we place a TS$(4,2)$ on $B_x\cup\{\infty\}$ with the flow values as indicated below.
\[
\begin{array}{ccccc}
B = & (\infty, x_0, x_1) &  (\infty, x_1, x_2) & (\infty, x_0, x_2) & (x_0, x_1, x_2) \\
f_\alpha(B) = & \alpha & \alpha & \alpha  & -2 \alpha
\end{array}
\]
This gives weight 0 to each $x_i\in \Z_n\times \Z_3$ and weight $3 \alpha$ to $\infty$.
If $n = \frac{v-1}{3}$ is even we label the designs on the verticals with $\alpha = +1, -1$ in pairs, to get a zero-sum $3$-flow.
If $n$ is odd, we label three designs on the verticals with $\alpha = 1, 1, -2$ (which gives a block of weight 4) and the rest with $\alpha = +1, -1$  in pairs to get a zero-sum 5-flow ($n > 2$).
\end{proof}

\section{An $O((\lambda v)^2)$ Bound for Zero-Sum Flows}
\label{bound}

In this section we establish an $O((\lambda v)^2)$ bound on the size of a zero-sum flow on a TS$(v,\lambda)$ when $v>4$, $v\neq 7$. Further, this zero-sum $O((\lambda v)^2)$-flow takes at most five distinct values.

\begin{theorem}
For $v>4$, $v\neq 7$, every TS$(v, \lambda)$ admits a zero-sum flow whose entries are in the set 
\[
S = \left\{ \;
\frac{-\lambda(v-3)}{2} \left(\frac{\lambda(v-3)}{2}-2 \right), \; -3\lambda, \; 
\frac{3\lambda(v-3)}{2},\; \frac{\lambda(v-7)}{2},\; \lambda(v-4)\;  
  \right\}.
\]
\end{theorem}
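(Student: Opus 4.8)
The plan is to rephrase the statement as the existence of a null vector of the incidence matrix $A$ with entries in $S$, and to build such a vector from indicator vectors attached to a single block. Writing $r=\lambda(v-1)/2$ for the replication number and $m=r-\lambda=\lambda(v-3)/2$, the three identities I would rely on are $A\mathbf 1_x=\lambda\mathbf 1+(r-\lambda)e_x$ for a point $x$ (counting pair-incidences), $A\mathbf 1_{\cB}=r\mathbf 1$ (each point lies in $r$ blocks), and $A\mathbf 1_{B_0}=e_a+e_b+e_c$ for a fixed block $B_0=\{a,b,c\}$, where $\mathbf 1_x$, $\mathbf 1_{\cB}$ and $\mathbf 1_{B_0}$ are the indicator vectors, indexed by blocks, of the blocks through $x$, of all blocks, and of the single block $B_0$. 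These hold for every TS$(v,\lambda)$ and any choice of $B_0$, independently of any repeated blocks, which is what should make the argument uniform.

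Next I would look for a null vector of the shape $f=x(\mathbf 1_a+\mathbf 1_b+\mathbf 1_c)+y\,\mathbf 1_{\cB}+z\,\mathbf 1_{B_0}$. Applying $A$ and using the identities gives $Af=(3x\lambda+yr)\mathbf 1+(xm+z)(e_a+e_b+e_c)$, so $f$ is a flow exactly when $3x\lambda+yr=0$ and $xm+z=0$. Taking $x=r$ forces $y=-3\lambda$ and $z=-rm$, and reading off $f(B)$ by the size of $B\cap\{a,b,c\}$ gives, for blocks meeting $B_0$ in $0,1,2$ points, the values $-3\lambda$, $r-3\lambda=\lambda(v-7)/2$ and $2r-3\lambda=\lambda(v-4)$, which are three of the entries of $S$, while any block equal to the triple $\{a,b,c\}$ other than $B_0$ receives $3r-3\lambda=3\lambda(v-3)/2$, the fourth entry. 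This already exhibits an explicit zero-sum flow taking only $O((\lambda v)^2)$-bounded values and so pins down the order of magnitude.

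The remaining point is the weight on the distinguished block itself. The naive choice above assigns $B_0$ the value $3m-rm=-m(r-3)$, which equals the target entry $-\tfrac{\lambda(v-3)}2\bigl(\tfrac{\lambda(v-3)}2-2\bigr)=-m(m-2)$ only when $\lambda=1$; for larger $\lambda$ the two quadratics differ by $m(\lambda-1)$. To force $B_0$ onto $-m(m-2)$ while keeping every other block inside $S$, I would absorb this discrepancy by redistributing weight $m(\lambda-1)$ onto blocks through $a,b,c$ that are to carry the value $3\lambda(v-3)/2$, adjust the correction term accordingly, and re-verify the two balance equations; the intersection-type bookkeeping must be redone with the variable count $P_x$ of blocks meeting $B_0$ in two points, exploiting that $2P_x+Q_x=3\lambda$ is constant so that the dependence on $P_x$ cancels at every point $x\notin B_0$.

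I expect this last redistribution to be the main obstacle. The clean symmetric ansatz is rigid: any combination of point-stars, the all-ones vector, and multiples of $B_0$ is forced to be the single family above, so producing the fifth value while simultaneously pinning the distinguished block to the exact quadratic $-m(m-2)$ must break that symmetry or exploit the blocks on the pairs of $B_0$, and it must be shown to work for every admissible $\lambda$ and every TS$(v,\lambda)$, including simple ones. Finally I would verify that the entries actually occurring are nonzero: $-3\lambda$ never vanishes, whereas $\lambda(v-4)$, $\lambda(v-7)/2$ and $3\lambda(v-3)/2$ vanish exactly at $v=4,7,3$, forcing those exclusions; the quadratic entry additionally vanishes when $\lambda(v-3)/2=2$ (for instance $v=5$, $\lambda=2$), a case to be dispatched separately by checking that the distinguished block then coincides with one of the other, nonzero, values of $S$.
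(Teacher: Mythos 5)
Your first construction is, after unwinding notation, exactly the paper's proof: the paper solves $MM^TY=tZ$ with the ansatz $Y=[a,a,a,b,\ldots,b]^T$, and since $M^TY=(a-b)\,M^T(e_a+e_b+e_c)+3b\,\mathbf 1$, the resulting null vector of $N$ is precisely your $f=x(\mathbf 1_a+\mathbf 1_b+\mathbf 1_c)+y\mathbf 1_{\cB}+z\mathbf 1_{B_0}$ with $x=r$, $y=-3\lambda$, $z=-rm$ (up to the harmless factor $3$). Your balance equations and the resulting values $-3\lambda$, $\tfrac{\lambda(v-7)}{2}$, $\lambda(v-4)$, $\tfrac{3\lambda(v-3)}{2}$ on the blocks meeting $B_0$ in $0,1,2,3$ points are all correct, as is your value $-m(r-3)$ on the distinguished block itself.

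The discrepancy you then try to repair is not a defect of your argument but an arithmetic slip in the paper: in the displayed equation for $t$ the diagonal contribution at a point of $B_0$ is written as $(r-\lambda)a+2(\lambda-1)a$, whereas the diagonal entry of $MM^T$ there is $r-1$, so the coefficient should be $(r-1)+2(\lambda-1)=r+2\lambda-3$ rather than $r+\lambda-2$. Redoing that line gives $t=3m(r-3)$, i.e.\ exactly your value, and the first element of $S$ should read $-\tfrac{\lambda(v-3)}{2}\bigl(\tfrac{\lambda(v-1)}{2}-3\bigr)$, which coincides with the printed $-m(m-2)$ only when $\lambda=1$. So you should drop the proposed redistribution step entirely --- it is chasing a typo, you correctly flag it as the weak point, and within the rigid ansatz it cannot succeed anyway; the theorem is true with the corrected first entry, and the $O((\lambda v)^2)$ bound and the five-value claim are unaffected. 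Your nonvanishing check should then be run on $-m(r-3)$ rather than $-m(m-2)$: it vanishes exactly when $r=3$, i.e.\ for $(v,\lambda)\in\{(3,3),(4,2),(7,1)\}$, all excluded by $v>4$, $v\neq 7$, so the separate dispatch of the case $m=2$ is not needed.
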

\begin{proof}
Consider a TS$(v, \lambda)$ with incidence matrix $N$. We know that the number of appearances of each element in  blocks is $r=\frac{\lambda(v-1)}{2}$. 
We show that the last column of $N$ is a linear combination of the other columns with coefficients from the set $3S$. 

By a suitable ordering of the elements of $\{1, \dots ,v\}$, we can assume that the last column of $N$ is $Z=[1, 1 , 1, 0,\ldots ,0]^T$.
Now, we remove the last column of $N$ and call the remaining matrix $M$. We then have the following equality:
\[
L=MM^T=\left[
 \begin{array}{ccccccccccc}
   (r-\lambda)I_3 +(\lambda-1)J_3 & & &  \lambda J_{3,v-3}\\
    \lambda J_{v-3, 3} & & & (r-\lambda)I_{v-3}+ \lambda J_{v-3} \\
  \end{array}\right], 
\]
where $J_{p,q}$ is the $p \times q$ all-1 matrix.  For simplicity we denote $J_{p,p}$ by $J_p$.

We now show that the system of linear equations $LY=tZ$ has integral solutions such that each component of $Y$ is in the set $\{-3\lambda, \frac{3\lambda(v-3)}{2}\}$.
Assume that $Y=[a,a,a,b,\ldots ,b]^T$, where $a$ and $b$ are unknown variables and there are $v-3$ $b$s. 
We may solve the equations
\[
\begin{array}{ccc}
(r-\lambda)a+2(\lambda-1)a+ \lambda(v-3)b=t & \mbox{ and } & 3\lambda a+(r+\lambda(v-4))b=0,
\end{array}
\]
to get 
$a=r-\lambda(v-4)= \frac{3\lambda(v-3)}{2}$, $b=-3\lambda\;$ ($v>4$) and
$$t=(r+\lambda-2)(r-\lambda(v-4))-3\lambda^2(v-3)=\frac{-3\lambda(v-3)}{2} \left(\frac{\lambda(v-3)}{2}-2 \right).$$
Now, since $MM^TY=tZ$ and each row of $M^T$ has exactly three $1$s, we may conclude that there exists a vector, $X$, in the nullspace of $N$ whose components are all of the form $-t$, $3a$, $2a+b$, $a+2b$ or $3b$.
Finally, we note that when $v>4$, $v\neq 7$, all of the components of $X$ are non-zero integers which are divisible by 3. Now, $X'=\frac{1}{3}X$ is also in the nullspace of $N$ and all of its components are in the set $S$.
\end{proof}

\section{\boldmath$(2v+1)$-construction for zero-sum flows on STS($v$)}
\label{recursive}

In this section we show that the standard $(2v+1)$-construction for Steiner triple systems can be adapted to respect zero-sum flows in many cases. While we work in the case $\lambda=1$, the generalisation to higher $\lambda$ is clear.

We say that a graph $G$ has a {\em $k$-null 1-factorisation} if $G$ has a zero-sum $k$-flow and there is a $1$-factorisation in which the weight of each $1$-factor is zero. We call each 1-factor of a $k$-null 1-factorisation a {\em $k$-null 1-factor}.

\begin{lemma}
\label{2v+1}
If $K_{v+1}$ has a $k$-null 1-factorisation and there exists an STS$(v)$, $\cS$, with a zero-sum $\ell$-flow, then $\cS$ can be embedded into an STS$(2v+1)$ with a zero-sum $\max(k,\ell)$-flow.
\end{lemma}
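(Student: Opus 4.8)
The plan is to combine the two given flows within the standard $(2v+1)$-construction. I would take the point set of the STS$(2v+1)$ to be $X \cup Y$, where $X=\{1,\ldots,v\}$ carries the given system $\cS$ and $Y$ is a set of $v+1$ new points. Since every admissible $v$ is odd, $v+1$ is even and $K_{v+1}$ on the vertex set $Y$ admits a $1$-factorization into exactly $v$ $1$-factors; by hypothesis I may take this to be a $k$-null $1$-factorization $F_1,\ldots,F_v$ together with its associated zero-sum $k$-flow $h$ on the edges of $K_{v+1}$. I then index the $1$-factors by the points of $X$, and for each $i\in X$ and each edge $\{a,b\}\in F_i$ I form the new block $\{i,a,b\}$. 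The blocks of the STS$(2v+1)$ are the original blocks of $\cS$ together with all of these new triples; a routine count confirms that this is indeed an STS$(2v+1)$ and that $\cS$ is embedded as the subsystem on $X$.

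The flow I would assign is the obvious superposition: each original block $B$ of $\cS$ retains its value $g(B)$ from the given zero-sum $\ell$-flow, and each new block $\{i,a,b\}$ with $\{a,b\}\in F_i$ receives the value $h(\{a,b\})$. Since these values lie in $\{\pm 1,\ldots,\pm(\ell-1)\}$ and $\{\pm 1,\ldots,\pm(k-1)\}$ respectively, and all are nonzero, the combined assignment uses only values in $\{\pm 1,\ldots,\pm(\max(k,\ell)-1)\}$, as required for a zero-sum $\max(k,\ell)$-flow.

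It then remains to verify the zero-sum condition at every point, which splits into two cases matching the two defining properties of a $k$-null $1$-factorization. For an original point $i\in X$, the incident blocks are those of $\cS$ through $i$ (whose weights sum to zero because $g$ is a zero-sum flow on $\cS$), together with the new blocks $\{i,a,b\}$ for $\{a,b\}\in F_i$, whose weights sum to the weight of the $1$-factor $F_i$; this is zero precisely because the $1$-factorization is null. For a new point $a\in Y$, the incident blocks are the triples $\{i,a,b\}$ where $\{a,b\}$ is the edge of $F_i$ meeting $a$; as $i$ ranges over $X$ these edges run over every edge of $K_{v+1}$ incident with $a$, so their weights sum to the weighted degree of $a$ under $h$, which is zero because $h$ is a zero-sum flow. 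Hence the superposition is a zero-sum $\max(k,\ell)$-flow.

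The only nontrivial step, and the main point to get right, is the bookkeeping in this last verification: one must see that the new blocks around an original point $i$ reassemble exactly the single $1$-factor $F_i$ (invoking the null property), while the new blocks around a single point $a\in Y$ reassemble exactly the star of $a$ across all the $1$-factors (invoking the flow property). This dual role is precisely why both conditions packaged into the definition of a $k$-null $1$-factorization are needed, and once the correspondence between new blocks, $1$-factors, and edges is fixed the verification is immediate.
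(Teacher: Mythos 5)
Your proposal is correct and follows essentially the same construction as the paper: index the $1$-factors of the $k$-null $1$-factorisation by the points of $X$, give each new triple the weight of its corresponding edge, and retain the original flow on $\cS$. Your verification at the two types of points is exactly the (omitted) check the paper leaves to the reader.
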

\begin{proof}
Let $(X,\cB)$ be an STS$(v)$, with $X=\{x_1, \ldots, x_v\}$, and let $Y$ be a set of order $v+1$. We will construct the new design on $X\cup Y$. Construct a $k$-null 1-factorisation on $K_{v+1}$ with point set $Y$. Let $F_1, \ldots , F_{v}$ be  the 1-factors in this $1$-factorisation, and suppose that $F_i=\{\{y_{ij}, z_{ij}\}\mid 1\leq j\leq \frac{v+1}{2}\,\},$ for $i=1, \ldots, v$.
Form the triples 
$$
\cC = \bigcup_{1\leq i\leq v,\, 1\leq j \leq \frac{v+1}{2}} \{x_i, y_{ij}, z_{ij}\},
$$
it is easy to see that $\cB\cup \cC$ is an STS$(2v+1)$. In order to obtain a zero-sum $\max(k,\ell)$-flow, we retain the original weighting on $\cB$, and for each triple $\{x_i, y_{ij}, z_{ij}\}\in \cC$, we give it the weight of the edge $\{y_{ij}, z_{ij}\}$ in $F_i$. It is not hard to see that we obtain a zero-sum $\max(k,\ell)$-flow on the STS$(2v+1)$, $(X\cup Y,\cB\cup \cC)$.
\end{proof}

\begin{lemma}
\label{bipart} 
There exists a $3$-null 1-factorisation of $K_{n,n}$ for every $n\geq 3$.
\end{lemma}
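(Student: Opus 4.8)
I need to show that the complete bipartite graph $K_{n,n}$ admits a $3$-null 1-factorisation for every $n\geq 3$: that is, a zero-sum $3$-flow (edge weights in $\{\pm1,\pm2\}$) together with a partition of the edges into perfect matchings, each of which has total weight zero.

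**Plan.** The plan is to use the standard cyclic 1-factorisation of $K_{n,n}$. Label the two parts $A=\{a_0,\dots,a_{n-1}\}$ and $B=\{b_0,\dots,b_{n-1}\}$, and for each $d\in\Z_n$ let $F_d=\{\{a_i,b_{i+d}\}\mid i\in\Z_n\}$ be the perfect matching of ``difference $d$''. These $n$ matchings partition the edge set of $K_{n,n}$. On each matching I then choose edge weights from $\{\pm1,\pm2\}$ so that (i) each matching has total weight zero, and (ii) the weights around every vertex sum to zero. Condition (i) guarantees the factorisation is null; condition (i) combined with the weighting being nonzero everywhere gives the zero-sum $3$-flow.

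**Key steps.** First I would handle the parity of $n$ within each individual matching. A single matching $F_d$ has $n$ edges, so if $n$ is even I can weight its edges alternately $+1,-1,+1,-1,\dots$ around the cycle of indices, giving that matching total weight $0$ by itself; if $n$ is odd I instead use a pattern like $+1,-1,\dots$ on $n-3$ of the edges and $+1,+1,-2$ (or $+2,-1,-1$) on the remaining three, again summing to $0$ on that matching. This immediately secures the null condition (i) for every matching, regardless of the vertex sums. The second and more delicate step is to align these per-matching weightings so that the contributions also cancel at every vertex. Each vertex $a_i$ meets exactly one edge from each $F_d$ (namely $\{a_i,b_{i+d}\}$), and similarly each $b_j$; so the vertex sum at $a_i$ is the sum over $d$ of the weight assigned to its edge in $F_d$. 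The idea is to pair up the matchings: since within a pair of matchings I can arrange the weight patterns to be negatives of each other edge-by-edge (so that at every vertex the two contributions cancel), each such pair contributes zero to every vertex sum. When $n$ is even, all $n$ matchings pair off cleanly, and I am done. When $n$ is odd there is one leftover matching, which I must weight as a self-contained zero-sum configuration on its own vertices — but a single perfect matching of $K_{n,n}$ touches each vertex exactly once, so a lone matching cannot be vertex-balanced by itself. I therefore group the matchings into pairs plus one triple (possible since $n\geq 3$), and on the triple I arrange the three weightings so that at each vertex the three assigned values sum to zero while each matching separately sums to zero.

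**Main obstacle.** The routine parity bookkeeping is easy; the real work is the triple of leftover matchings in the odd case. The constraint there is genuinely two-dimensional: I need three functions $w_1,w_2,w_3:\Z_n\to\{\pm1,\pm2\}$ (thinking of $w_c(i)$ as the weight on the edge of the $c$-th matching at $a_i$), such that each $w_c$ sums to zero over $\Z_n$ (null matchings) and $w_1(i)+w_2(i)+w_3(i)=0$ for every $i$ (vertex balance at the $a$-side), with an analogous check on the $b$-side forced automatically by the matching structure. The clean resolution I expect to use is to exploit that odd $n\geq3$ lets me take a base triple of patterns realising the pointwise sum $0$ (for instance using the column triples of values summing to zero, like $(1,1,-2)$ and $(-1,-1,2)$ distributed across the three matchings) and then verify the row sums vanish by a short counting argument. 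I would present an explicit weighting scheme for this triple and check the two sum conditions directly; this explicit construction is the crux, and once it is in hand the pairing argument assembles the full $3$-null 1-factorisation for all $n\geq3$.
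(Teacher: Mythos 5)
Your proposal is an outline rather than a proof: you defer the two constructions that you yourself identify as the crux (the alignment of the per-matching patterns, and the leftover triple in the odd case), and the claims you make about how that alignment will go are not correct as stated. The central difficulty of this lemma is that every condition must hold on \emph{both} sides of the bipartition, and your plan only controls one side. Concretely: if you pair $F_d$ with $F_{d'}$ and set the weight of $a_ib_{i+d'}$ equal to the negative of the weight $p_i$ of $a_ib_{i+d}$ (your ``negatives of each other edge-by-edge''), the contributions cancel at every $a_i$, but the two edges meeting $b_j$ come from \emph{different} $a$-vertices, so the sum at $b_j$ is $p_{j-d}-p_{j-d'}$; this vanishes for all $j$ only if $p$ is periodic with period $d-d'$, which constrains both the choice of pairing and the pattern (an alternating $\pm1$ pattern forces $d-d'$ to be even, for instance). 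Likewise your parenthetical claim that the $b$-side balance of the leftover triple is ``forced automatically by the matching structure'' is false: at $b_j$ the three incident weights are $w_1(j-d_1)+w_2(j-d_2)+w_3(j-d_3)$, evaluated at three \emph{different} arguments, so the pointwise identity $w_1(i)+w_2(i)+w_3(i)=0$ gives nothing there. Since the explicit weightings that would resolve these issues are exactly what you postpone, the argument as written has a genuine gap.

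For contrast, the paper sidesteps the two-sided balancing problem entirely: for $n\neq 2,6$ it takes two orthogonal Latin squares $L_1,L_2$ of order $n$, uses $L_2$ to define the $1$-factorisation and $L_1$ to define ``colour classes'' (which are themselves perfect matchings), and assigns a weight that is \emph{constant on each colour class}, with the $n$ constants chosen to sum to zero inside $\{\pm1,\pm2\}$ (using $2,-1,-1$ followed by alternating $\pm1$ when $n$ is odd). Every vertex on either side meets each colour exactly once, so all vertex sums equal $\sum_k c_k=0$, and by orthogonality every $1$-factor also meets each colour exactly once, so every $1$-factor is null; the case $n=6$ is handled by splitting $K_{6,6}$ into four copies of $K_{3,3}$. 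If you wish to rescue your cyclic approach, note that for odd $n$ the classes $\{a_ib_j : i+j=s\}$ play exactly the role of the paper's colour classes, but for even $n$ the cyclic square has no orthogonal mate, and your difference-$d$ factorisation genuinely requires a separate argument of the periodicity type sketched above.
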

\begin{proof}
First assume that $n\neq 6$ and let $L_1$ and $L_2$ be two orthogonal Latin squares of order $n$.
Let $U=\{u_1,\ldots ,u_n\}$ and $V=\{v_1,\ldots ,v_n\}$ be the two parts of the complete bipartite graph $K_{n,n}$. We use $L_1$ to define a colouring, $c$, on the edges of $K_{n,n}$, and $L_2$ to factor $K_{n,n}$ into $k$-null 1-factors as follows. For each edge $u_iv_j\in E(K_{n,n})$, $1\leq i,j\leq n$, we give $u_iv_j$ colour $c(u_i,v_j)=L_1(i,j)$. We now define a 1-factorisation of $K_{n,n}$ by, $F_k = \{(u_i,v_j) \mid L_2(i,j) = k\}$. Note that since $L_1$ and $L_2$ are orthogonal, each $F_i$ contains exactly one edge of each colour.
If $n$ is even, then we assign $1$ to every edge whose colour is in the set $\{1,\ldots, \frac{n}{2}\}$ and assign $-1$ to the remaining edges. If $n$ is odd, then we assign $2, -1,-1$ to all edges with colours, $1,2$ and $3$, respectively and assign $1,-1$ to all edges with colours $4,5,\ldots, n$, alternately. It is not hard to see that this assignment is the desired zero-sum $3$-flow for $K_{n,n}$.

For the case $n=6$, the edges of $K_{6,6}$ can be decomposed into four subgraphs isomorphic to $K_{3,3}$. Since every $K_{3,3}$ has the desired zero-sum $3$-flow, we are done.
\end{proof}

\begin{lemma}
\label{4} 
There exists a $3$-null 1-factorisation of $K_{n}$ for every $n\equiv 0\pmod 4$, $n \neq 4$.
\end{lemma}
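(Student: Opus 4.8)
The plan is to reduce the problem to the bipartite case already handled in Lemma~\ref{bipart} by splitting the vertex set of $K_n$ in half. Write $n=2m$, so that $m=n/2$ is even and, since $n\neq 4$, $m\geq 4$. Partition the vertices into two sets $A$ and $B$ of size $m$, and decompose the edges of $K_n$ into the two complete graphs $K_A\cong K_B\cong K_m$ together with the complete bipartite graph between them, which is a copy of $K_{m,m}$. On the bipartite part I would apply Lemma~\ref{bipart} (valid since $m\geq 4\geq 3$) to obtain a zero-sum $3$-flow whose bipartite $1$-factors $G_1,\dots,G_m$ each have weight zero and at which every vertex already has bipartite edge-sum zero.

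To weight the two complete halves, first fix any $1$-factorisation $F_1^A,\dots,F_{m-1}^A$ of $K_A$ and a zero-sum $3$-flow on $K_A$ (constructed below). Transport both to $B$ through the natural bijection $A\to B$, but negate every weight, so that the image factor $F_i^B$ carries the edge-weights of $F_i^A$ with reversed sign. Combine the halves into the $1$-factors $H_i=F_i^A\cup F_i^B$ of $K_n$ for $1\le i\le m-1$, and take the full $1$-factorisation to be $\{H_1,\dots,H_{m-1}\}\cup\{G_1,\dots,G_m\}$; this is a genuine $1$-factorisation of $K_n$ since the $F^A$'s, $F^B$'s and $G$'s use each edge of $K_A$, $K_B$ and $K_{m,m}$ respectively exactly once, and there are $(m-1)+m=n-1$ factors. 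The mirroring forces the weight of $H_i$ to be $w(F_i^A)+w(F_i^B)=0$, while each $G_k$ has weight zero by Lemma~\ref{bipart}; and at a vertex $a\in A$ the total edge-sum is the $K_A$-sum (zero, since we used a zero-sum flow on $K_A$) plus the bipartite sum (zero), with the mirrored situation at each $b\in B$. As all weights lie in $\{\pm1,\pm2\}$, this is the desired $3$-null $1$-factorisation.

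What remains is to exhibit a zero-sum $3$-flow on $K_m$ for every even $m\ge 4$. Here I would use the classical (Walecki) decomposition of $K_m$ into $m/2-1$ Hamilton cycles together with a perfect matching $M$. Give every edge of $M$ the weight $+2$, give every edge of one chosen Hamilton cycle the weight $-1$, and weight each remaining Hamilton cycle alternately $+1,-1$ around the cycle (possible as these cycles have even length $m$). Then every vertex meets one matching edge of weight $+2$ and two cycle edges of weight $-1$ in the chosen cycle, summing to zero, and contributes $+1-1=0$ on each alternately weighted cycle, so all vertex sums vanish.

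The one genuinely non-obvious point --- and the reason $n\equiv 0\pmod 4$ is the right hypothesis --- is that I do \emph{not} need a full $3$-null $1$-factorisation of the half $K_m$, only a zero-sum $3$-flow on it; the weight-zero condition on the combined factors $H_i$ is supplied for free by the sign-reversing mirror between $A$ and $B$. This is what lets the argument avoid recursing into the cases $m\equiv 2\pmod4$, where a $3$-null $1$-factorisation is not guaranteed, and I expect verifying this bookkeeping (that mirroring simultaneously zeroes the factor weights and preserves the vanishing of the vertex sums) to be the main thing to get right.
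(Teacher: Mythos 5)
Your proof is correct and follows essentially the same route as the paper: both split $K_n$ into the join of two copies of $K_{n/2}$, invoke Lemma~\ref{bipart} on the bipartite part, and use a sign-reversed mirror image between the two halves so that the combined $1$-factors automatically have weight zero. The only difference is how the zero-sum $3$-flow on each half is produced --- you use a Walecki decomposition into Hamilton cycles plus a perfect matching, while the paper weights each $1$-factor of $K_{n/2}$ by a constant from the pattern $2,-1,-1,1,-1,1,\dots$ --- and your explicit observation that only a zero-sum flow (not a null factorisation) is needed on each half is exactly the point the paper's construction exploits implicitly.
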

\begin{proof}
Let $n=4r$, $r>1$, and consider the complete graph $K_{n}$ as the join of two complete graphs $K_{2r}$ and $K_{2r}$. By Lemma~\ref{bipart}, $K_{2r,2r}$ has a $3$-null 1-factorisation.
Let $M_1,\ldots, M_{2r-1}$ and $M'_1,\ldots, M'_{2r-1}$ be two $1$-factorisations for the first and the second $K_{2r}$, respectively. Then $\{M_i\cup M'_i \mid 1\leq i\leq 2r-1\}$ forms a $1$-factorisation for the disjoint union of the two $K_{2r}$.
Now assign $2$ to all edges of $M_1$, $-2$ to all edges of $M'_1$, $-1$ to all edges of $M_2$ and $1$  to all edges of $M'_2$. For each $i$, $3\leq i\leq 2r-1$ assign $-1$ and $1$ to all edges of $M_i$, alternately. For each $i$, $3\leq i\leq 2r-1$ assign $1$ and $-1$ to $M'_i$, alternately. 
\end{proof}

Now, using Lemmas \ref{2v+1} and \ref{4}, we have the following result.

\begin{theorem}
\label{recursive 3-flow}
If $v \equiv 3\pmod 4$, $v>3$, then every ${\rm STS}(v)$ with a zero-sum $3$-flow can be embedded into an ${\rm STS}(2v+1)$ with a zero-sum $3$-flow.
\end{theorem}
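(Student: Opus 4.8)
The plan is to chain the two preceding lemmas together; the statement is essentially a parameter-matching exercise once those are in hand. First I would observe that the hypothesis $v \equiv 3 \pmod 4$ forces $v+1 \equiv 0 \pmod 4$, and that the assumption $v > 3$ gives $v+1 > 4$, so in particular $v+1 \neq 4$. These are exactly the two conditions required to apply Lemma~\ref{4}, which then furnishes a $3$-null $1$-factorisation of $K_{v+1}$.

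Next, starting from an arbitrary STS$(v)$, say $\cS$, that is assumed to carry a zero-sum $3$-flow, I would feed both ingredients into Lemma~\ref{2v+1}, taking $k = 3$ (from the $3$-null $1$-factorisation of $K_{v+1}$) and $\ell = 3$ (from the given flow on $\cS$). Since $\max(k,\ell) = \max(3,3) = 3$, that lemma yields an embedding of $\cS$ into an STS$(2v+1)$ equipped with a zero-sum $3$-flow, which is precisely the desired conclusion.

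The only point demanding any care is checking that the numerical conditions align, namely that $v+1$ lands in the residue class $0 \pmod 4$ and avoids the single excluded value $4$. There is no genuine obstacle: the substantive work — the explicit flow-preserving embedding and the concrete factorisation — has already been discharged in Lemmas~\ref{2v+1} and~\ref{4}, so this final step is purely a matter of verifying hypotheses and composing the two results. If anything, the detail worth stating explicitly in the write-up is \emph{why} the restriction $v > 3$ (as opposed to merely $v \equiv 3 \pmod 4$) is needed: without it one could have $v = 3$, giving $v+1 = 4$, which is the one case that Lemma~\ref{4} does not cover.
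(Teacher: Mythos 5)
Your proposal is correct and matches the paper exactly: the paper derives this theorem by the same composition of Lemma~\ref{4} (with $n=v+1\equiv 0\pmod 4$, $n\neq 4$) and Lemma~\ref{2v+1} (with $k=\ell=3$), stating it as an immediate consequence. Your added remark on why $v>3$ is needed is a sensible clarification but not a departure from the paper's argument.
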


We note that the requirement $v \equiv 3\pmod 4$ and the existence of an STS$(v)$ implies that $v\equiv 3, 7\pmod{12}$.

A similar proof to that of Lemma \ref{4} shows that if $n\equiv 0\pmod 8$ then $K_n$ has a zero-sum $2$-flow with a $1$-factorisation in which the weight of each $1$-factor is zero.
By Lemma~\ref{2v+1}, this shows that when $v\equiv 7\pmod 8$ every ${\rm STS}(v)$ with a zero-sum $2$-flow can be embedded into an ${\rm STS}(2v+1)$ with a zero-sum $2$-flow.

\begin{lemma}
\label{null 6k+4}
There exists a $4$-null 1-factorisation of $K_{6k+4}$ for every $k>1$.
\end{lemma}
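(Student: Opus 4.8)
The plan is to split on the parity of $k$. When $k$ is even, $6k+4\equiv 0\pmod 4$ and $6k+4>4$, so Lemma~\ref{4} already supplies a $3$-null $1$-factorisation of $K_{6k+4}$; since any $3$-null $1$-factorisation is in particular a $4$-null one, nothing further is needed. The substantive case is $k$ odd, where $6k+4\equiv 2\pmod 4$ and the join-of-two-equal-halves trick of Lemma~\ref{4} is unavailable because each half would have odd order. I would instead write $K_{6k+4}=K_m\vee K_m$ with $m=3k+2$ (odd), on vertex classes $A=\{a_1,\dots,a_m\}$ and $C=\{c_1,\dots,c_m\}$ together with all edges of the connecting bipartite graph $K_{m,m}$.

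For the $1$-factorisation I would combine the rotational near-$1$-factorisations of the two odd cliques with Lemma~\ref{bipart}. Indexing by $\Z_m$, let $N_1,\dots,N_m$ (resp.\ $N'_1,\dots,N'_m$) be the standard near-$1$-factorisation of $K_A$ (resp.\ $K_C$), where $N_i$ misses $a_i$ and $N'_i$ misses $c_i$, and let $H_1,\dots,H_m$ be the perfect matchings partitioning $K_{m,m}$ produced by Lemma~\ref{bipart}, arranged so that $H_1=\{\{a_i,c_i\}:i\in\Z_m\}$ is the diagonal. Then
$$P_i=N_i\cup N'_i\cup\{\{a_i,c_i\}\},\qquad i\in\Z_m,$$
is a perfect matching of $A\cup C$ (the diagonal edge covers the two missed vertices), and $H_2,\dots,H_m$ supply the remaining $m-1$ factors; one checks these $2m-1$ matchings partition $E(K_{6k+4})$, so this is a genuine $1$-factorisation, and then apply Lemma~\ref{2v+1}.

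The weighting would be assembled from three pieces. On the bipartite edges I keep the $3$-null weighting $\beta$ of Lemma~\ref{bipart}, so every $H_j$ has weight $0$ (in particular $\sum_i\beta(\{a_i,c_i\})=0$) and the bipartite edges at each vertex already sum to $0$. On $K_C$ I take a rotational base weighting, assigning edges by length so that all vertex sums are $0$ and every $N'_i$ has weight $0$. On $K_A$ I take a weighting with all vertex sums $0$ but with prescribed near-factor totals $w(N_i)=-\beta(\{a_i,c_i\})$; since these targets sum to $0$ the global consistency condition holds. With these choices each $P_i$ has weight $w(N_i)+w(N'_i)+\beta(\{a_i,c_i\})=0$, each $H_j$ with $j\ge 2$ has weight $0$, and at every vertex the internal contribution and the bipartite contribution each vanish separately, yielding a zero-sum flow.

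The main obstacle is the last ingredient: producing a weighting on the odd clique $K_A$ that realises arbitrary prescribed near-factor totals $s_i$ (with $\sum_i s_i=0$ and each $|s_i|\le 2$) while keeping every vertex sum zero and all edge values nonzero of absolute value at most $3$. I would begin from the rotational base weighting (which has vanishing vertex sums and vanishing near-factor totals) and correct it by alternating $\pm1$ adjustments along the even cycles in unions of pairs of near-factors, routing the required discrepancies between indices; because $\sum_i s_i=0$ these corrections can be made to close up without disturbing the vertex sums, the only cost being that a base value $\pm2$ may be pushed to $\pm3$. This is precisely why the construction produces a $4$-null, rather than a $3$-null, $1$-factorisation, and making the correction explicit while respecting the value bound is the crux of the argument.
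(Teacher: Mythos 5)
Your even-$k$ reduction to Lemma~\ref{4} is fine ($6k+4\equiv 0\pmod 4$ when $k$ is even, and a $3$-null $1$-factorisation is a fortiori $4$-null), and the combinatorial skeleton for odd $k$ --- the $1$-factorisation $P_i=N_i\cup N'_i\cup\{\{a_i,c_i\}\}$ together with $H_2,\dots,H_m$ --- is sound. But there is a genuine gap exactly where you locate the crux: you never construct the weighting of the odd clique $K_A$ with all vertex sums zero, prescribed near-factor totals $w(N_i)=s_i$, and edge values in $\{\pm1,\pm2,\pm3\}$, and the correction mechanism you sketch does not work as stated. For $i\neq i'$ the union $N_i\cup N_{i'}$ of two near-$1$-factors of an odd clique is \emph{not} a union of even cycles: the missed vertices $i$ and $i'$ have degree $1$ in it, so one component is a path from $i$ to $i'$, and an alternating $\pm1$ adjustment along that path changes the vertex sums at $i$ and $i'$, destroying the zero-sum property. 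The genuine cycle components do preserve vertex sums, but each such cycle alternates between the two near-factors and transfers weight equal to its half-length, which is governed by the order of $2(i'-i)$ in $\Z_m$ --- typically far larger than the $|s_i|\le 2$ you need to route, and not under your control. Even granting some transfer scheme, you would still have to check that no edge value is pushed to $0$ or beyond $\pm3$; nothing in the sketch guarantees this. So the odd-$k$ case is not proved.

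For comparison, the paper avoids this difficulty entirely: it keeps the rotational $1$-factorisation $F_1,\dots,F_{6k+3}$ of $K_{6k+4}$ on $\Z_{6k+3}\cup\{\infty\}$ and groups the factors into $2k+1$ triples $F_i\cup F_{2k+1+i}\cup F_{4k+2+i}$, each a cubic graph isomorphic to $K_4\cup k\,K_{3,3}$; explicit $4$-null weightings of $K_{3,3}\cup K_{3,3}\cup K_4$ and of $K_{3,3}$ (via Lemma~\ref{bipart}) then finish uniformly in $k$, with no parity split and no prescribed-total problem. If you want to salvage your route for odd $k$, you need either an explicit construction realising arbitrary prescribed near-factor totals on an odd clique, or a different correction device (for instance, adjustments along even closed walks confined to controlled difference classes), together with a careful bound on the resulting edge values.
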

\begin{proof} 
Let the point set of $K_{6k+4}$ be $\Z_{6k+3}\cup\{\infty\}$ and let $F_1, \ldots , F_{6k+3}$ be the $1$-factorisation of $K_{6k+4}$ defined by $F_i=F_1+i$, $0\leq i\leq 6k+2$, where $F_1=\{(x, -x)\mid x\in
\{1,\ldots, 3k+1\} \} \cup \{(0, \infty)\}$. We claim that $K_{6k+4}$ can be partitioned into $2k+1$ cubic graphs each isomorphic to 
$$K_{3,3}\cup K_{3,3}\cup \cdots \cup K_{3,3} \cup K_4,$$ 
where the number of $K_{3,3}$ is $k$.

It is not hard to see that $F_1\cup F_{2k+2}\cup F_{4k+3}$ is a disjoint union of a $K_4$ with vertex set $\{\pm (2k+1), 0, \infty \}$ and $k$ copies of $K_{3,3}$ with the vertex sets 
\[
\{\pm i, \pm (2k+1-i), \pm (2k+1+i)\},\; 1\leq i\leq k,
\]
and partite sets 
\[
X_i=\{i, -(2k+1-i), 2k+1+i\} \mbox{ and } Y_i=\{-i, 2k+1-i, -(2k+1+i)\}.
\]
Clearly, $F_i\cup F_{2k+1+i}\cup F_{4k+2+i}\simeq F_1\cup F_{2k+2}\cup F_{4k+3}$, for $i=1, \ldots , 2k+1$ and so it can also be decomposed as required.

Now, consider the following edge assignment for disjoint union of two $K_{3,3}$ and a $K_4$.
\begin{center}
\includegraphics[height=80pt]{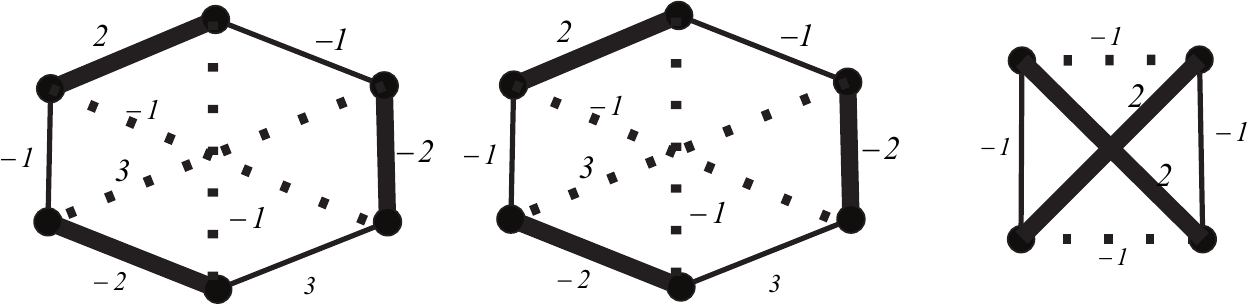}
\end{center}

Thus, by this assignment, $K_{3,3}\cup K_{3,3}\cup K_4$ admits a $4$-null 1-factorisation.
On the other hand, by Lemma \ref{bipart}, $K_{3,3}$ admits a $4$-null 1-factorisation.
\end{proof}

We may use  Lemma~\ref{null 6k+4} above in Lemma~\ref{2v+1} to get the following result.

\begin{theorem} 
\label{recursive 4-flow}
If $v\equiv 3 \pmod 6$, $v>9$, then every ${\rm STS}(v)$ with a zero-sum $4$-flow can be embedded in an ${\rm STS}(2v+1)$ with a zero-sum $4$-flow.
\end{theorem}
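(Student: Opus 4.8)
The plan is to combine the recursive embedding machinery of Lemma~\ref{2v+1} with the explicit $4$-null $1$-factorisation of the relevant complete graph supplied by Lemma~\ref{null 6k+4}. The hypothesis $v\equiv 3\pmod 6$ with $v>9$ means we wish to embed an $\mathrm{STS}(v)$ into an $\mathrm{STS}(2v+1)$, so the auxiliary complete graph in Lemma~\ref{2v+1} has order $v+1$. First I would verify the arithmetic: since $v\equiv 3\pmod 6$, we have $v+1\equiv 4\pmod 6$, so $v+1=6k+4$ for some integer $k$, and the constraint $v>9$ forces $6k+4>10$, i.e.\ $k>1$, which is exactly the range in which Lemma~\ref{null 6k+4} guarantees a $4$-null $1$-factorisation of $K_{6k+4}=K_{v+1}$.

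With this identification in hand, I would apply Lemma~\ref{2v+1} with $k=4$ (the null-flow parameter of the $1$-factorisation) and $\ell=4$ (the flow parameter of the given $\mathrm{STS}(v)$). The lemma then yields an embedding of the $\mathrm{STS}(v)$ into an $\mathrm{STS}(2v+1)$ carrying a zero-sum $\max(k,\ell)$-flow. Since both parameters equal $4$, we have $\max(4,4)=4$, so the resulting design admits a zero-sum $4$-flow, as claimed. This is essentially a one-line deduction once the two lemmas are invoked, mirroring exactly how Theorem~\ref{recursive 3-flow} is obtained from Lemmas~\ref{2v+1} and~\ref{4}.

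The main obstacle, such as it is, lies entirely in checking that the parameter ranges line up: one must confirm that $v>9$ (rather than merely $v>3$) is precisely what is needed to land in the $k>1$ regime of Lemma~\ref{null 6k+4}, since the case $k=1$ (i.e.\ $v=9$, $K_{10}$) is excluded there. I would note explicitly that $v=9$ gives $6k+4=10$ with $k=1$, which is outside the scope of Lemma~\ref{null 6k+4}, thereby justifying the strict lower bound $v>9$ in the statement. No genuinely new construction is required; the work is already packaged in the preceding lemmas, and the theorem is a direct corollary combining them over the correct residue class.
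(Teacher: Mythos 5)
Your proposal is correct and follows exactly the paper's route: the paper obtains this theorem by feeding the $4$-null $1$-factorisation of $K_{6k+4}$ from Lemma~\ref{null 6k+4} into the embedding machinery of Lemma~\ref{2v+1}, just as you do. Your explicit check that $v\equiv 3\pmod 6$, $v>9$ corresponds precisely to $v+1=6k+4$ with $k>1$ is the only detail the paper leaves implicit, and you have it right.
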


Given two graphs $G$ and $H$, the {\it Cartesian product} of $G$ and $H$, $G\square H$, is the graph with vertex set $V(G)\times V(H)$, and $(x,a)\, (y,b) \in E(G\square H)$ if and only if either $x=y$ and $a\,b\in E(H)$, or $a=b$ and $x\,y\in E(G)$.
We can deal with the case $v=9$ allowing for a $5$-flow.
\begin{lemma}
\label{K_10}
There exists a $5$-null 1-factorisation of $K_{10}$.
\end{lemma}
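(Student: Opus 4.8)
The plan is to realise $K_{10}$ as an edge-disjoint union of three \emph{spanning} cubic $3$-factors, each isomorphic to $K_4\cup K_{3,3}$, and to weight each factor separately so that every vertex sum is zero within that factor and each of its three perfect matchings has weight zero. Since the three factors are spanning, adding the (vanishing) vertex contributions over them produces a zero-sum flow on $K_{10}$; and the nine perfect matchings (three per factor) are exactly a $1$-factorisation of $K_{10}$, each of weight zero. Thus it suffices to exhibit a $5$-null weighting of the single cubic graph $K_4\cup K_{3,3}$.

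First I would produce the decomposition. Place $K_{10}$ on $\Z_9\cup\{\infty\}$ and take the rotational $1$-factorisation $F_0,\ldots,F_8$ used in Lemma~\ref{null 6k+4} with $k=1$. The same difference computation carried out there shows that $G_i=F_i\cup F_{i+3}\cup F_{i+6}$ (indices mod $9$), $i=1,2,3$, partitions the edges into three spanning cubic subgraphs, each isomorphic to $K_4\cup K_{3,3}$; the only reason Lemma~\ref{null 6k+4} excludes $k=1$ is that its figure pairs \emph{two} copies of $K_{3,3}$, which is unavailable here with a single $K_{3,3}$ per factor. Because each $G_i$ splits as a disjoint $K_4$ (on $4$ points) and $K_{3,3}$ (on $6$ points), every $1$-factor $F_j\subseteq G_i$ automatically restricts to a perfect matching of the $K_4$ together with a perfect matching of the $K_{3,3}$, so the three $1$-factors of $G_i$ line up with the standard $1$-factorisations of its two components.

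Next I would weight $K_4\cup K_{3,3}$. I weight $K_{3,3}$ first, using its Latin-square $1$-factorisation $M_1,M_2,M_3$ and choosing values in $\{\pm1,\ldots,\pm4\}$ with all six vertex sums zero; a short search gives, for instance, matching totals $4,4,-8$ (e.g.\ $M_1,M_2,M_3$ carrying edge values $(1,1,2)$, $(2,1,1)$, $(-3,-2,-3)$ respectively). The key point is that imposing vertex sum zero on $K_4$ forces the two edges of each $K_4$-matching to carry \emph{equal} weight, so each $K_4$-matching contributes an even total; to cancel the $K_{3,3}$ totals $4,4,-8$ the three $K_4$-matchings must then be weighted $-2,-2,4$ on both of their edges, which indeed sums to $-2-2+4=0$ at each $K_4$-vertex. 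Pairing $M_i$ with the $i$-th $K_4$-matching makes each combined $1$-factor sum to zero, and every weight lies in $\{\pm1,\ldots,\pm4\}$, yielding the required $5$-null $1$-factorisation of $K_4\cup K_{3,3}$, and hence of $K_{10}$.

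I expect the main obstacle to be exactly this parity phenomenon: the forced equality of the two edges in each $K_4$-matching means the $K_{3,3}$ factor totals must be even and of absolute value at most $8$, which is why a $3$- or $4$-flow is not readily available and the value $4$ (hence a $5$-flow) is needed. A secondary point to verify carefully is that the decomposition really yields spanning factors whose $1$-factors agree with the global $1$-factorisation of $K_{10}$, so that the independently constructed factor weightings assemble into one genuine zero-sum $5$-flow with all nine $1$-factors of weight zero.
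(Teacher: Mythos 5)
Your proof is correct, but it takes a genuinely different route from the paper. The paper decomposes $K_{10}$ into two copies of the pentagonal prism $C_5\square K_2$ and one copy of $(K_3\square K_2)\cup K_4$, and then exhibits the required edge weights in a figure; you instead push the rotational construction of Lemma~\ref{null 6k+4} down to $k=1$, obtaining three spanning cubic factors each isomorphic to $K_4\cup K_{3,3}$, and weight that single graph algebraically. I checked the details: on $\Z_9\cup\{\infty\}$ the factor $F_1\cup F_4\cup F_7$ is indeed a $K_4$ on $\{0,3,6,\infty\}$ plus a $K_{3,3}$ on $\{1,4,7\}\cup\{2,5,8\}$, its translates partition $E(K_{10})$, and each global $1$-factor restricts to a perfect matching of each component; your $K_{3,3}$ weighting $(1,1,2),(2,1,1),(-3,-2,-3)$ has all six vertex sums zero with matching totals $4,4,-8$, and the forced-equal-edge argument in $K_4$ correctly yields matching weights $-2,-2,4$, so every $1$-factor has weight zero and all values lie in $\{\pm 1,\dots,\pm 4\}$. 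What your approach buys is a fully verifiable, figure-free construction that reuses the paper's own machinery and, via the parity obstruction in $K_4$ (each $K_4$-matching contributes an even total, and the lone $K_{3,3}$ must absorb totals summing to zero of absolute value up to $8$), explains structurally why the value $4$, hence a $5$-flow, appears; the paper's prism decomposition is shorter to state but leaves both the decomposition and the weighting as assertions supported only by pictures.
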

\begin{proof}
It is not hard to see that $K_{10}$ can be decomposed into $3$ cubic graphs 
\[
C_5\square K_2, C_5\square K_2, (K_3\square K_2)\cup K_4.
\]
Now, the following edge assignments imply that there is a $5$-null 1-factorisation of $K_{10}$.
\begin{center}
\includegraphics[height=80pt]{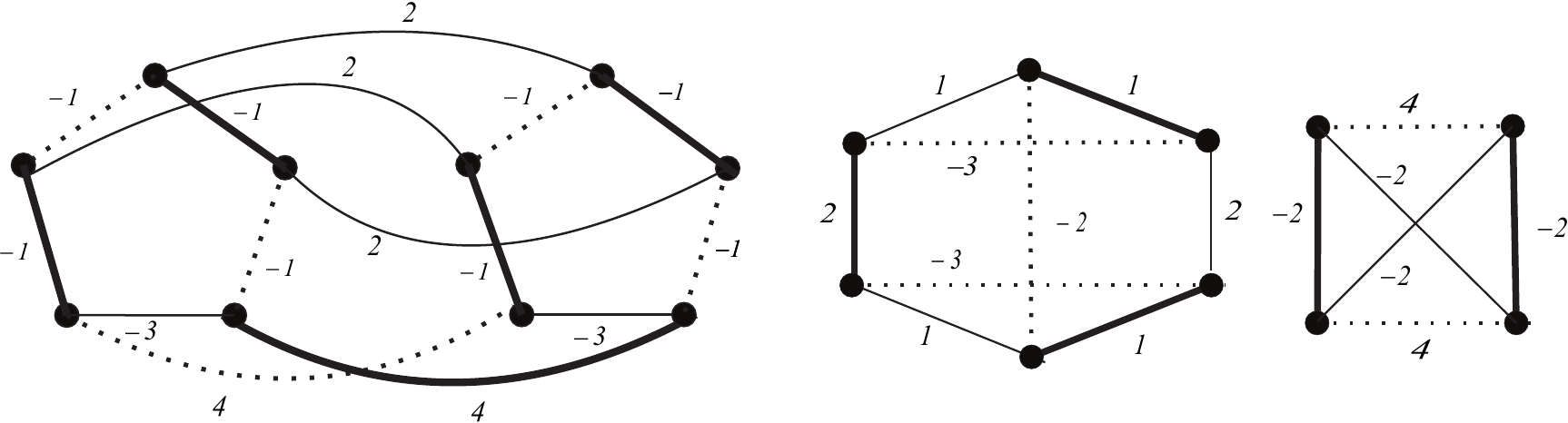}
\end{center}
\end{proof}
Given two graphs $G$ and $H$, the {\it wreath product} of $G$ and $H$, $G\wr H$, is the graph with vertex set $V(G)\times V(H)$, and $(x,a)\, (y,b) \in E(G\wr H)$ if and only if $x\,y\in E(G)$, or $x=y$ and $a\,b\in E(H)$.

A {\it $2$-factor} is a collection of cycles that spans all vertices of the graph. A {\it $2$-factorisation} of a graph $G$ is an edge decomposition of $G$ into $2$-factors. 
The problem of finding an $F$-factorisation of $K_n$, where $F$ is a given $2$-factor of order $n$, is the well known Oberwolfach problem. Clearly a $2$-factorisation of $K_n$ cannot exist when $n$ is even; in this case it is common practice to consider factorisations of $K_n - I$, the complete graph with the edges of a 1-factor $I$ removed. It is known that the Oberwolfach problem has no solution when $F \in \{C_3\cup C_3, C_3 \cup C_3 \cup C_3 \cup C_3, C_4\cup C_5, C_3\cup C_3\cup C_5\}$. Otherwise, a solution is known for every case where $n\leq 40$ \cite{DFWMR 10}, and if every component of the factor is isomorphic \cite{als, ASSW 89, Hoffman Schellenberg 91}. The case where $F$ is bipartite is nearly solved \cite{BD, hag}; the case where $F$ consists of exactly two parts  is solved~\cite{Bryant 01, Traetta 13}. Rotational solutions have been studied \cite{Buratti Rinaldi 05, Buratti Rinaldi 08, Buratti Traetta 12} and many other families are known \cite{Bryant Schar 09, Hilton Johnson 01, Ollis 05}, but no general solution is known. See \cite[Section VI.12]{handbook} for a survey.
H\"{a}ggkvist proved the following very useful result in \cite{hag}.

\begin{lemma}[\cite{hag}]
\label{hag}
For any $m > 1$ and for each bipartite $2$-regular graph $F$ of order $2m$, there exists a $2$-factorisation of $C_m\wr \overline{K_2}$, in which each $2$-factor is isomorphic to $F$.
\end{lemma}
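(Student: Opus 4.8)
The plan is to prove H\"{a}ggkvist's lemma by an explicit edge decomposition, exploiting the fact that a bipartite $2$-regular graph is just a disjoint union of even cycles. First I would fix notation: since $F$ has order $2m$ and is bipartite and $2$-regular, I may write $F\cong C_{2a_1}\cup\cdots\cup C_{2a_t}$ with $a_1+\cdots+a_t=m$. The graph $G=C_m\wr\overline{K_2}$ has vertex set $\Z_m\times\Z_2$, and $(i,s)(i',s')$ is an edge exactly when $i'=i\pm 1$, the layers $s,s'\in\Z_2$ being arbitrary. Grouping the two vertices $(i,0),(i,1)$ over each $i\in\Z_m$ into a \emph{super-vertex}, the $4m$ edges of $G$ split into $m$ copies of $K_{2,2}$, one across each ``gap'' between consecutive super-vertices $i$ and $i+1$; in particular $G$ is $4$-regular. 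Since $G$ is $4$-regular, the complement of any $2$-factor is again a $2$-factor, so it suffices to produce one $2$-factor $G_1\cong F$ whose complement $G_2=G\setminus E(G_1)$ is also isomorphic to $F$.

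The construction I would use realises each cycle $C_{2a_j}$ of $F$ as an ``out-and-back'' loop through a consecutive arc of super-vertices. Choose turnaround super-vertices $p_0,p_1,\dots,p_{t-1}$ around $\Z_m$ with $p_{j+1}-p_j=a_j$ (indices mod $t$), so the arcs tile $\Z_m$. For the $j$-th loop I traverse the super-vertices $p_j,p_j+1,\dots,p_{j+1}$ forward along one layer and return along the other, closing with turnaround edges at the two boundary super-vertices:
\[
(p_j,0),(p_j+1,0),\dots,(p_{j+1},0),(p_{j+1}-1,1),(p_{j+1}-2,1),\dots,(p_j+1,1),(p_j,0).
\]
This is a cycle of length $2a_j$ that uses both vertices of each interior super-vertex and a single vertex of each boundary super-vertex. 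Alternating the forward layer between consecutive loops forces two loops meeting at a shared boundary super-vertex to use its two distinct vertices, so the loops are vertex-disjoint and together cover all $2m$ vertices. This yields the first $2$-factor $G_1\cong F$.

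It then remains to identify $G_2$ and show $G_2\cong F$. Inspecting the gaps, $G_1$ uses at each interior gap the two ``straight'' edges of the relevant $K_{2,2}$ and at each turnaround gap the two edges incident with one boundary vertex; hence $G_2$ uses the two ``cross'' edges at each interior gap and the two edges at the opposite boundary vertex at each turnaround gap. Thus $G_2$ is again a union of out-and-back loops over the same arcs, but on the opposite layers, and the goal is to check that its components again have lengths $2a_1,\dots,2a_t$. The hard part will be precisely this verification: the cross edges make $G_2$ switch layers as it advances, so one must argue that this layer-switching neither merges nor splits loops across the boundary super-vertices. I would resolve it either by a symmetry of $G$ --- an automorphism interchanging straight and cross edges at every gap that carries $G_1$ onto $G_2$ --- or, where no such clean symmetry is available, by directly tracing the loops of $G_2$ exactly as for $G_1$. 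Finally, small separate arguments are needed for the degenerate case $t=1$ (the single loop is a Hamilton cycle of $G$, so one instead exhibits a Hamilton decomposition of $G$ into two copies of $C_{2m}$) and for the parity obstruction to alternating the forward layers when $t$ is odd, which is handled by a local adjustment of the turnaround pattern.
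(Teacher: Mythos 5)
First, a point of comparison: the paper does not prove this statement at all --- it is quoted from H\"aggkvist \cite{hag} as a known result --- so your attempt must stand on its own. Your outline does follow the natural ``out-and-back loop'' idea, and the opening reduction is sound: $C_m\wr\overline{K_2}$ is $4$-regular, so it suffices to exhibit one $2$-factor $G_1\cong F$ whose complement $G_2$ is also isomorphic to $F$, and your loop of length $2a_j$ over each arc of $a_j$ consecutive super-vertices is a correct way to build $G_1$ when the loops can be made disjoint. But as written the argument has three genuine gaps, all of which you flag and none of which you close. The central one is the verification that $G_2\cong F$: this is not a routine afterthought but the entire content of the lemma. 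Neither of your two proposed strategies is executed, and the ``symmetry interchanging straight and cross edges at every gap'' does not exist as a graph automorphism (it is not induced by any permutation of the vertices at a turnaround gap, where $G_1$ takes one straight and one cross edge). The direct trace does work --- within the arc for $C_{2a_j}$ the complement consists of the two cross edges at each interior gap and, at each of the two end gaps, the straight edge and the cross edge meeting the boundary vertex \emph{not} used by $G_1$, so each component of $G_2$ is confined to one arc and closes into a single $2a_j$-cycle --- but this is exactly the computation that has to be written down.

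The second gap is the parity obstruction for odd $t$, which is real for your specific loops: each loop as you define it meets \emph{both} of its boundary super-vertices in the same layer, so alternating layers around $\Z_m$ is consistent only when $t$ is even. The repair is not a ``local adjustment of the turnaround pattern'' but a change to the loops themselves: one must use loops that meet their two boundary super-vertices in \emph{different} layers (e.g.\ always layer $0$ at the left turnaround and layer $1$ at the right), which works for every $t\ge 2$ but alters the edge pattern at the end gaps and hence forces the complement analysis to be redone. The third gap is the case $t=1$, where $F$ is a Hamilton cycle of $C_m\wr\overline{K_2}$ and your construction degenerates; a Hamilton decomposition of $C_m\wr\overline{K_2}$ does exist, but the obvious candidate (go around layer $0$, cross over, go around layer $1$, cross back) has a complement that is a single $2m$-cycle only when $m$ is even, so the odd case needs its own construction. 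Until these three items are supplied, the proposal is an outline of the right strategy rather than a proof.
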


Since it is well known that $K_{2k+1}$ has a Hamiltonian factorisation, we obtain the following theorem, which was proved in \cite{als}.

\begin{theorem}[\cite{als}]
\label{wr}
For every positive integer $k$, the graph $K_{4k+2}$ can be decomposed into $k-1$, graphs isomorphic to $C_{2k+1}\wr \overline{K_2}$, and one graph isomorphic to $C_{2k+1}\wr K_2$.
\end{theorem}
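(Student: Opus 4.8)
The plan is to exhibit $K_{4k+2}$ itself as a wreath product and then transport a Hamiltonian decomposition of a smaller complete graph through it. The first step is the identification $K_{4k+2}\cong K_{2k+1}\wr K_2$. Writing the vertex set as $V(K_{2k+1})\times\{1,2\}$, the defining condition for an edge $(x,a)(y,b)$ of $K_{2k+1}\wr K_2$ is that $xy\in E(K_{2k+1})$ (equivalently $x\neq y$) or that $x=y$ and $ab\in E(K_2)$ (equivalently $a\neq b$); together these say exactly that $(x,a)\neq(y,b)$, so every pair of distinct vertices is adjacent. I would then classify the edges into two types: the \emph{between-fibre} edges $(x,a)(y,b)$ with $x\neq y$, which are in bijection with the edges of the underlying $K_{2k+1}$, and the \emph{within-fibre} edges $(x,1)(x,2)$, which form a perfect matching $I$ on the $4k+2$ vertices.

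The key input, stated just above, is that $K_{2k+1}$ (being $2k$-regular) has a Hamiltonian factorisation into $k$ cycles $C^{(1)},\ldots,C^{(k)}$, each a copy of $C_{2k+1}$. The next step is to observe that this factorisation lifts to the between-fibre edges: for each edge $xy$ of $K_{2k+1}$ the four between-fibre edges joining the fibre over $x$ to the fibre over $y$ are present, and these are precisely the between-fibre edges of $C^{(i)}\wr\overline{K_2}$ exactly when $xy\in E(C^{(i)})$. Since $\overline{K_2}$ contributes no within-fibre edges, the between-fibre edges lying over $C^{(i)}$ constitute exactly a spanning copy of $C_{2k+1}\wr\overline{K_2}$. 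As the $C^{(i)}$ partition the edges of $K_{2k+1}$, the between-fibre edges of $K_{4k+2}$ split into $k$ edge-disjoint spanning copies of $C_{2k+1}\wr\overline{K_2}$.

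It remains to place the matching $I$. Since each copy $C^{(i)}\wr\overline{K_2}$ is spanning, all $2k+1$ edges of $I$ lie within the vertex set of each, so I would adjoin $I$ to a single one of them, say the copy over $C^{(1)}$. Adding the within-fibre matching supplies exactly the edges $(x,1)(x,2)$ that distinguish $K_2$ from $\overline{K_2}$, converting that copy into $C_{2k+1}\wr K_2$ while leaving the other $k-1$ copies untouched. A degree count ($4(k-1)+5=4k+1$) or edge count ($(2k+1)(4k+1)=\binom{4k+2}{2}$) confirms that every edge is used exactly once, giving the desired decomposition into $k-1$ copies of $C_{2k+1}\wr\overline{K_2}$ and one copy of $C_{2k+1}\wr K_2$.

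I expect the only delicate point to be the bookkeeping in the middle step: one must verify that the blow-up of a spanning cycle really recovers $C_{2k+1}\wr\overline{K_2}$ on the between-fibre edges, and in particular that the wreath product respects the edge-partition of the first factor so that distinct Hamiltonian cycles yield edge-disjoint blow-ups. Once the two edge-types are separated cleanly this is immediate, so the argument is essentially a matter of fixing the right notation rather than of any real combinatorial difficulty, with the substance carried entirely by the known Hamiltonian factorisation of $K_{2k+1}$.
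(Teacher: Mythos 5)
Your proof is correct and is exactly the argument the paper intends: the result is cited to Alspach and H\"aggkvist and justified by the one-line remark that $K_{2k+1}$ has a Hamiltonian factorisation, which you have simply written out in full by identifying $K_{4k+2}$ with $K_{2k+1}\wr K_2$, blowing up each Hamiltonian cycle to a copy of $C_{2k+1}\wr\overline{K_2}$, and attaching the within-fibre matching to one of them. No discrepancy with the paper's approach.
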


\begin{lemma}
\label{null 4k+2}
There exists a $5$-null 1-factorisation of $K_{4k+2}$ for every $k>1$.
\end{lemma}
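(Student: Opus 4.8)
The plan is to give a recursive construction anchored at the base case $K_{10}$ (the case $k=2$), which is Lemma~\ref{K_10}, and to build every larger $K_{4k+2}$ by decomposing it via Theorem~\ref{wr}. The structural observation that makes this work is that a $5$-null $1$-factorisation behaves well under edge-disjoint decomposition into spanning subgraphs: if $K_{4k+2}=G_1\cup\cdots\cup G_t$ with the $G_i$ edge-disjoint and each spanning, and each $G_i$ carries an edge-weighting with values in $\{\pm1,\ldots,\pm4\}$ that is a zero-sum flow and admits a $1$-factorisation of $G_i$ in which every $1$-factor has weight $0$, then the union of these weightings is a zero-sum flow of $K_{4k+2}$ (vertex sums add), and the union of the $1$-factorisations is a $1$-factorisation of $K_{4k+2}$ all of whose $1$-factors have weight $0$ (their regularities $4(k-1)+5=4k+1$ account for all of $K_{4k+2}$). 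Thus it suffices to equip each block with its own $5$-null $1$-factorisation. Since Theorem~\ref{wr} splits $K_{4k+2}$ into $k-1$ copies of $C_{2k+1}\wr\overline{K_2}$ and one copy of $C_{2k+1}\wr K_2$, the whole problem reduces to treating these two graphs.

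First I would treat $C_{2k+1}\wr\overline{K_2}$, which is $4$-regular. Since $2k+1>1$, H\"aggkvist's Lemma~\ref{hag} decomposes it into two bipartite $2$-factors, each a vertex-disjoint union of even cycles $D_1,\ldots,D_s$ covering all $4k+2$ vertices. On each cycle $D_j$ I would place the alternating weighting $+w_j,-w_j,+w_j,-w_j,\ldots$ of constant magnitude $w_j\in\{1,2,3,4\}$; because every vertex of $D_j$ then meets exactly one $+w_j$ edge and one $-w_j$ edge, all vertex sums are automatically $0$, so the zero-sum flow condition is free. Splitting every even cycle into its two matchings yields the four $1$-factors, and the weight of such a $1$-factor is $\sum_j \epsilon_j w_j t_j$, where $t_j$ is the half-length of $D_j$ and $\epsilon_j=\pm1$ records which matching of $D_j$ was chosen. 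The complementary matching has weight $-\sum_j\epsilon_jw_jt_j$, so a single vanishing sum settles both; and the two $2$-factors can be weighted independently. It remains to pick the cycle type of the $2$-factor (using the freedom in Lemma~\ref{hag}), the signs $\epsilon_j$ and the magnitudes $w_j$ so that $\sum_j\epsilon_jw_jt_j=0$.

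The only genuine constraint here is parity: since $\sum_j t_j=2k+1$ is odd, the choice $w_j\equiv1$ forces an odd, hence nonzero, factor weight, which is exactly why a lone even cycle cannot be balanced in isolation. I would remove this by assigning an even magnitude to one odd-length cycle; for example, taking the $2$-factor of type $(k-1)\,C_4\cup C_6$ and putting magnitude $2$ on the $C_6$ reduces the target to $\sum_i\epsilon_iw_i=\mp3$ over the $k-1$ four-cycles, which a short case analysis on $k\bmod 2$ realises with signs and magnitudes at most $4$ (for instance $(-1)(1)+(-1)(2)$ when $k-1=2$, pairing the remaining $C_4$'s as $(+w,-w)$ otherwise). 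This fully resolves the $C_{2k+1}\wr\overline{K_2}$ blocks.

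The crux is the single copy of $C_{2k+1}\wr K_2$, which is $5$-regular. Here I would use the fact that, for odd $m$, the ``straight'' edges together with the verticals form the prism $C_{2k+1}\square K_2$, while the ``crossing'' edges form a single Hamiltonian cycle $C_{4k+2}$ (even, hence bipartite); this exhibits $C_{2k+1}\wr K_2$ as an edge-disjoint union of a cubic graph and a bipartite $2$-factor. The Hamiltonian cycle can be weighted by the alternating scheme above, but the vertical edges are the obstruction: any purely alternating, self-cancelling weighting of the $2$-regular pieces forces each vertex sum to equal the weight of its vertical edge, which cannot be $0$. Consequently the prism must receive an explicit null $1$-factorisation of its own, generalising the gadget used for $C_5\square K_2$ in Lemma~\ref{K_10}. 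This is the step I expect to be the main obstacle, since a cubic graph need not admit any null $1$-factorisation at all (for instance $K_4$ does not, as forcing its three matchings to zero weight forces a zero edge), so the weights on the rungs and rails must be tuned by hand, and it is precisely to absorb the resulting imbalances that the slack of a $5$-flow (values up to $\pm4$) is needed. Once the prism and the crossing cycle are each given zero-weight $1$-factors with zero vertex sums, their union is the required $5$-null $1$-factorisation of $C_{2k+1}\wr K_2$, and reassembling the blocks completes the construction.
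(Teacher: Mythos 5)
Your overall skeleton matches the paper's: decompose $K_{4k+2}$ via Theorem~\ref{wr}, handle the $k-1$ copies of $C_{2k+1}\wr\overline{K_2}$ with H\"aggkvist's Lemma~\ref{hag} using the $2$-factor $(k-1)C_4\cup C_6$ with an even magnitude on the $C_6$ to kill the parity obstruction, and glue the pieces. That part is sound and is essentially what the paper does. But the treatment of the single $5$-regular piece $C_{2k+1}\wr K_2$ --- which you correctly identify as the crux --- has two genuine problems. First, your plan for the crossing edges fails: for odd $m=2k+1$ they form a single Hamiltonian cycle of length $4k+2$, and any weighting of a lone cycle with all vertex sums zero must alternate $+w,-w$ with constant magnitude, so its two perfect matchings have weights $\pm(2k+1)w\neq 0$. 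A single even cycle of odd half-length can never be a null piece on its own; the paper avoids this precisely by peeling off a $2$-factor with \emph{several} cycles (e.g.\ $C_6\cup(k-1)C_4$, or $C_6\cup C_8$ when $k=3$), so that the nonzero matching weights of the individual cycles cancel \emph{across} cycles within each $1$-factor. Second, the remaining cubic graph (your prism $C_{2k+1}\square K_2$) is exactly where all the work lies, and you explicitly leave its $5$-null $1$-factorisation unconstructed, merely asserting that the weights ``must be tuned by hand.'' Since, as you yourself note, cubic graphs need not admit null $1$-factorisations at all ($K_4$ does not), this cannot be waved through: an explicit construction, uniform in $k$, is required. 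The paper supplies this by choosing the peeled-off $2$-factor so that the cubic remainder is a specific graph $H_{k-1}$ built from $K_4$-minus-an-edge blocks, exhibiting explicit $5$-null $1$-factorisations of $H_1$, $H_3$, $H_5$, a ``continuation'' gadget extending these to all $H_i$ with $i\neq 2$, and a separate ad hoc decomposition of $C_7\wr K_2$ for the exceptional case $k=3$. Without an analogue of that explicit, extendable gadget for your prism (and a repair of the Hamiltonian-cycle issue), the proof is incomplete at its central step.
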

\begin{proof} 
By Theorem \ref{wr}, $K_{4k+2}$ can be decomposed into $k-1$, graphs isomorphic to $C_{2k+1}\wr \overline{K_2}$, and one $C_{2k+1}\wr K_2$.
Now, let $F$ be a $2$-regular graph which is disjoint union of one $C_6$ and $k-1$, $C_4$s. 
\begin{figure}[h]
\caption{\label{H_i}
The cubic graph $H_i$}
\begin{center}
\includegraphics[height=70pt]{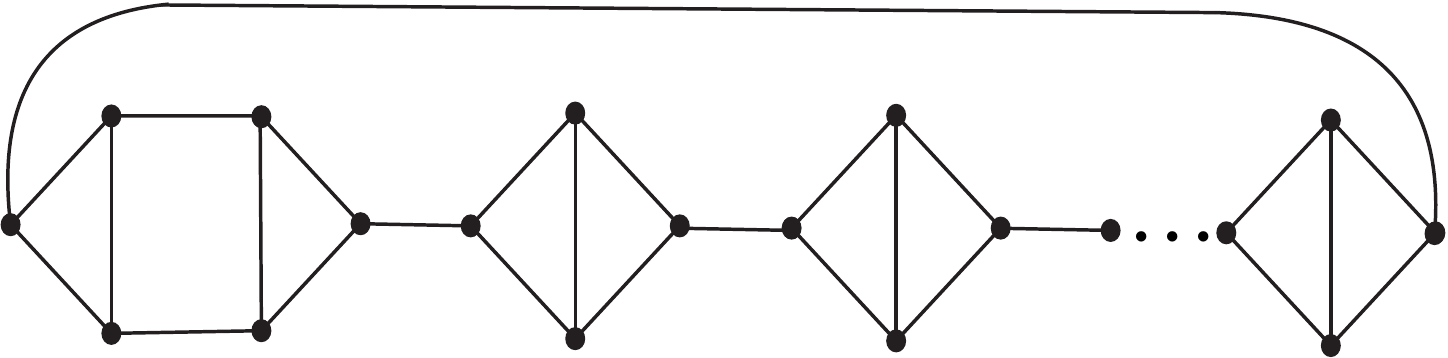}
\end{center}
\end{figure}
By Lemma \ref{hag}, $C_{2k+1}\wr \overline{K_2}$, has a $2$-factorisation in which each $2$-factor is isomorphic to $F$. Assign $-2$ and $2$ to the edges of $C_6$, alternately. If the number of
$C_4$ in $F$ is even, then assign $-2$ and $2$ to the edges of one $C_4$, alternately and assign $-1$ and $1$ to the edges of each other $C_4$, alternately. It is not hard to see that $F$ has a $3$-null 1-factorisation.
If the number of $C_4$ is odd, then assign  $-3$ and $3$ to the edges of one $C_4$, alternately and assign $-1$ and $1$ to the edges of each other $C_4$, alternately.
Again, it is not hard to see that $F$ has a $3$-null 1-factorisation.
\begin{figure}[h]
\caption{\label{decomp}
The edge decomposition of  $C_{2k+1}\wr K_2$ into $F$ and $H_{k-1}$}
\begin{center}
\includegraphics[height=70pt]{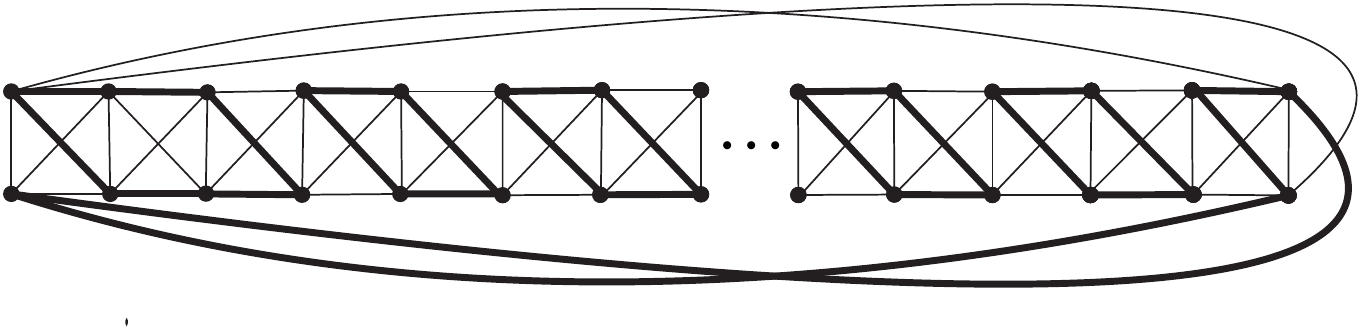}
\end{center}
\end{figure}

\begin{figure}
\caption{\label{5-flow H_1}
5-flow on $H_1$}
\begin{center}
\includegraphics[height=70pt]{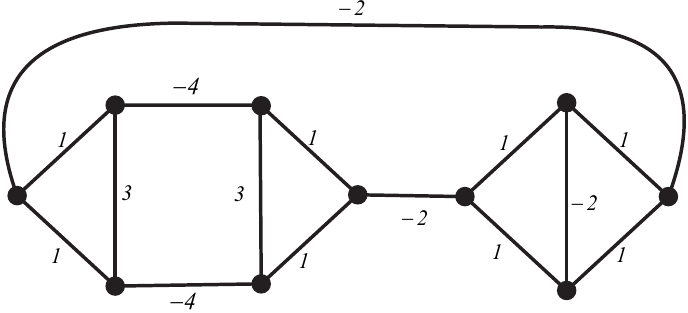} \\
\end{center}
\end{figure}

\begin{figure}
\caption{\label{5-flow H_3}
5-flow on $H_3$}
\begin{center}
\includegraphics[height=70pt]{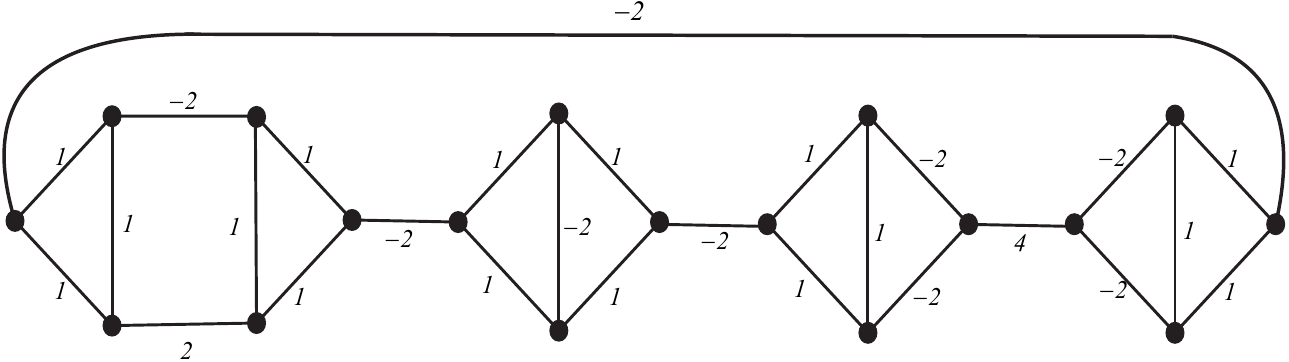} \\
\end{center}
\end{figure}

\begin{figure}
\caption{\label{5-flow H_5}
5-flow on $H_5$}
\begin{center}
\includegraphics[height=70pt]{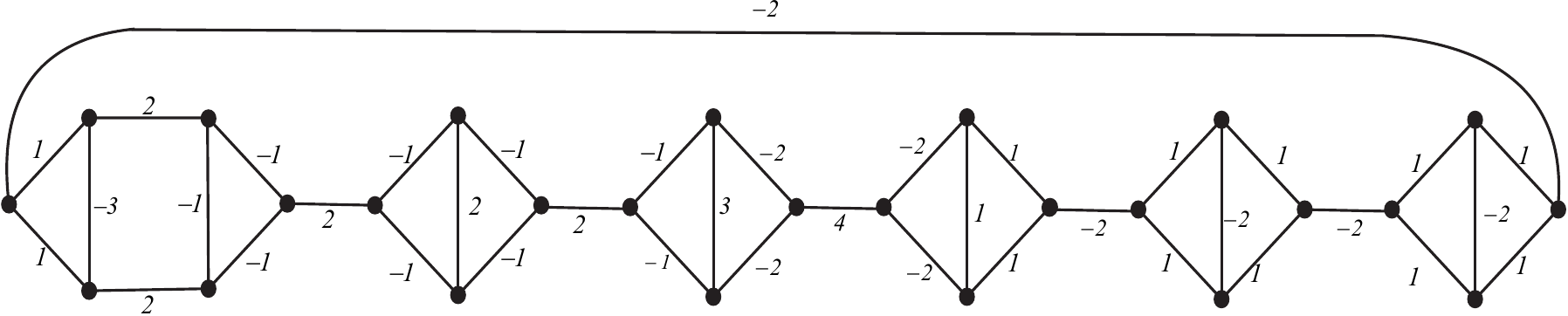} \\
\end{center}
\end{figure}
\comment{
\begin{figure}
\caption{\label{5-flow}
5-flow on $H_1$, $H_3$ and $H_5$}
\begin{center}
\begin{tabular}{c}
\includegraphics[height=70pt]{h1.pdf} \\
\includegraphics[height=70pt]{h3.pdf} \\
\includegraphics[height=70pt]{h5.pdf} \\
\end{tabular}
\end{center}
\end{figure}
}

Let $H_i$ be the cubic graph shown in Figure~\ref{H_i}, where there are exactly $i$ blocks of $K_4$ minus one edge.
Now, consider the $5$-regular graph $C_{2k+1}\wr K_2$, Figure~\ref{decomp} shows that $C_{2k+1}\wr K_2$ can be decomposed into a copy of $F$ and the cubic graph $H_{k-1}$.
Figures~\ref{5-flow H_1}, \ref{5-flow H_3} and \ref{5-flow H_5} show a $5$-null 1-facorisation of $H_1$, $H_3$ and $H_5$, respectively.
Figure~\ref{continue} gives a graph which can be appended to $H_1$, $H_3$ or $H_5$ to get a zero-sum $5$-flow with a $1$-factor in which the weight of each $1$-factor is zero for any $H_i$, except $i=2$.

\begin{figure}
\caption{\label{continue}
Continuation Graph}
\begin{center}
\includegraphics[height=50pt]{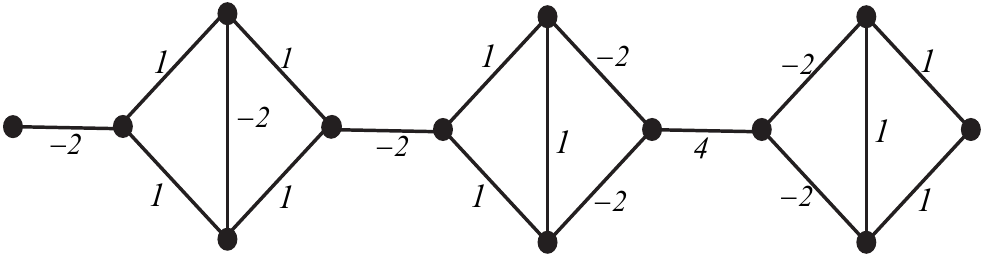}
\end{center}
\end{figure}

Now, for the case $i=2$, so $k=3$, let $L$ be the bipartite $2$-factor which is the disjoint union of a $C_6$ and a $C_8$. 
It is not hard to see that $C_7\wr K_2$ can be decomposed into $L$ and the cubic graph given in Figure~\ref{k=3}, which has a zero-sum $5$-flow with a desired $1$-factor.

\begin{figure}
\caption{\label{k=3}
Zero-sum $5$-flow on $L$}
\begin{center}
\includegraphics[height=70pt]{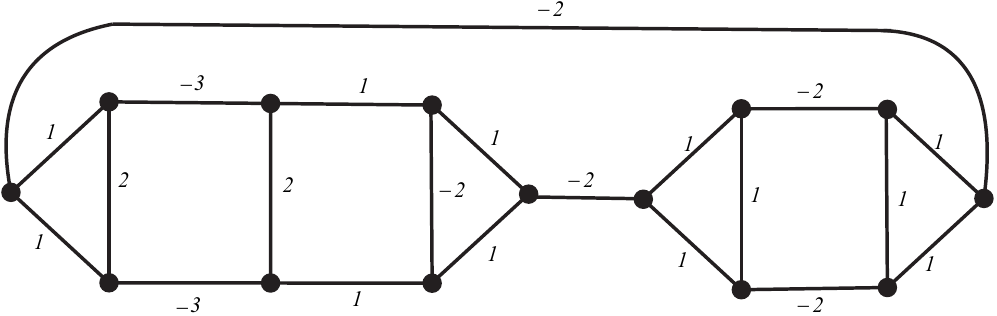}
\end{center}
\end{figure}

Now, alternately give weight $3$ and $-3$ to all edges of $C_8$ and $4$ and $-4$ to all edges of $C_6$ in $L$, alternately. This implies that $K_{14}$ has a zero-sum $5$-flow with a $1$-factorisation in which the weight of each $1$-factor is zero. 
\end{proof}

Now, using Lemma~\ref{null 4k+2} in Lemma~\ref{2v+1} we get the following result.
\begin{theorem}
\label{recursive 5-flow}
If $v\equiv 1 \pmod 4$, then every STS$(v)$ with a zero-sum $5$-flow can be embedded in an STS$(2v+1)$ with a zero-sum $5$-flow.
\end{theorem}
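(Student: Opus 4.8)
The plan is to derive Theorem~\ref{recursive 5-flow} directly from the two ingredients already in hand: the embedding machine of Lemma~\ref{2v+1} and the $5$-null 1-factorisations of the relevant complete graphs supplied by Lemma~\ref{null 4k+2}. The whole argument reduces to checking that the order $v+1$ always lands in the range covered by Lemma~\ref{null 4k+2}, and then feeding that $1$-factorisation, together with the assumed zero-sum $5$-flow on the STS$(v)$, into Lemma~\ref{2v+1}.

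First I would record the arithmetic. Since $v \equiv 1 \pmod 4$, the order $v$ is odd and we may write $v = 4k+1$, so that $v+1 = 4k+2$ with $k = (v-1)/4$. Hence the graph to be factorised in the embedding step is precisely $K_{v+1} = K_{4k+2}$. Because the construction starts from an actual STS$(v)$, and the smallest order with $v \equiv 1 \pmod 4$ admitting one (discarding the empty system on $1$ point and noting that no STS$(5)$ exists) is $v = 9$, we always have $k \geq 2$, i.e.\ $k > 1$.

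Next I would invoke Lemma~\ref{null 4k+2} to obtain a $5$-null 1-factorisation of $K_{4k+2} = K_{v+1}$ for every $k > 1$; the base value $k = 2$, corresponding to $v = 9$ and $K_{10}$, is in addition covered by Lemma~\ref{K_10}. Finally, I would apply Lemma~\ref{2v+1} with null-factorisation parameter $k = 5$ and flow parameter $\ell = 5$: given an STS$(v)$ carrying a zero-sum $5$-flow together with the $5$-null 1-factorisation of $K_{v+1}$, the lemma embeds it into an STS$(2v+1)$ carrying a zero-sum $\max(5,5) = 5$-flow, which is exactly the assertion.

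There is essentially no obstacle at this level, since all of the combinatorial difficulty has already been absorbed into Lemma~\ref{null 4k+2} (the decomposition of $K_{4k+2}$ via Theorem~\ref{wr}, the factorisations furnished by H\"aggkvist's Lemma~\ref{hag}, and the explicit $5$-flows on the cubic graphs $H_i$). The only genuine point requiring care in the present proof is the bookkeeping of small orders: namely confirming that $k > 1$ always holds so that Lemma~\ref{null 4k+2} applies, and that the boundary case $v = 9$ is in place (by Lemma~\ref{K_10}, should one wish to bypass the $k = 2$ instance of the general construction).
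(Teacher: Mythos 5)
Your proposal is correct and matches the paper exactly: the authors state the theorem as an immediate consequence of using Lemma~\ref{null 4k+2} inside Lemma~\ref{2v+1}, with no further argument. Your added bookkeeping (that $v\equiv 1\pmod 4$ with an STS$(v)$ forces $v\geq 9$, hence $v+1=4k+2$ with $k>1$, so Lemma~\ref{null 4k+2} applies) is exactly the check the paper leaves implicit.
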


Noting that the necessary conditions for the existence of an STS$(v)$ are $v\equiv 1,3,7,9\pmod{12}$, we may summarise the results of this section in the following theorem.
\begin{theorem}
An STS$(v)$ admitting a zero-sum $k$-flow may be embedded in an STS$(2v+1)$ which admits a zero-sum $k$-flow under the following conditions:
\begin{itemize}
\item
$k\geq 3$, $v\equiv 3, 7\pmod{12}$, $v>7$;
\item
$k\geq 4$, $v\equiv 9\pmod{12}$, $v>9$;
\item
$k\geq 5$, $v\equiv 1 \pmod{12}$, or $v=9$.
\end{itemize}
\end{theorem}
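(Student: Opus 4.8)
My plan is to derive each of the three bullets directly from Lemma~\ref{2v+1}, whose conclusion is that a $k'$-null 1-factorisation of $K_{v+1}$ together with an STS$(v)$ carrying a zero-sum $\ell$-flow yields an STS$(2v+1)$ with a zero-sum $\max(k',\ell)$-flow. The single observation that makes everything line up is monotonicity of flows: a zero-sum $j$-flow is automatically a zero-sum $k$-flow whenever $j\le k$, since $\{\pm1,\ldots,\pm(j-1)\}\subseteq\{\pm1,\ldots,\pm(k-1)\}$. Hence, starting from an STS$(v)$ with a zero-sum $k$-flow (so $\ell=k$), I only need to supply a $k'$-null 1-factorisation of $K_{v+1}$ with $k'\le k$; then $\max(k',k)=k$ and Lemma~\ref{2v+1} delivers exactly a zero-sum $k$-flow on the resulting STS$(2v+1)$. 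The remaining work is bookkeeping on the residue of $v$ modulo $12$, matching each case to a 1-factorisation already established.

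For the first bullet I would note that $v\equiv3,7\pmod{12}$ forces $v\equiv3\pmod4$, so $v+1\equiv0\pmod4$; since $v>7$ gives $v+1\neq4$, Lemma~\ref{4} supplies a $3$-null 1-factorisation of $K_{v+1}$, and $3\le k$ closes the case. This is exactly the content of Theorem~\ref{recursive 3-flow}. For the second bullet, $v\equiv9\pmod{12}$ gives $v\equiv3\pmod6$, so $v+1=6k'+4$ for some integer $k'$; the hypothesis $v>9$ ensures $k'>1$, whence Lemma~\ref{null 6k+4} provides a $4$-null 1-factorisation of $K_{v+1}$, and $4\le k$ closes the case, recovering Theorem~\ref{recursive 4-flow}.

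For the third bullet, both $v\equiv1\pmod{12}$ and $v=9$ satisfy $v\equiv1\pmod4$, so $v+1=4k'+2$. When $v\equiv1\pmod{12}$ we have $v\ge13$, hence $k'\ge3>1$, and Lemma~\ref{null 4k+2} supplies a $5$-null 1-factorisation of $K_{v+1}$; the exceptional value $v=9$ gives $v+1=10$, precisely the $k'=2$ case excluded from Lemma~\ref{null 4k+2}, which I would instead handle using the explicit $5$-null 1-factorisation of $K_{10}$ from Lemma~\ref{K_10}. In either subcase $5\le k$, so Lemma~\ref{2v+1} yields the required zero-sum $k$-flow, reproducing Theorem~\ref{recursive 5-flow}.

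There is no genuine obstacle here, as all the constructive difficulty already lives in the null 1-factorisation lemmas; the theorem is essentially an organised restatement of Theorems~\ref{recursive 3-flow}, \ref{recursive 4-flow} and \ref{recursive 5-flow}. The only points that demand care are arithmetic: checking that the three residue conditions together account for all four admissible classes $v\equiv1,3,7,9\pmod{12}$ (with $v=9$ the one member of the class $9$ that the second bullet's bound $v>9$ omits, and which the third bullet then absorbs), and verifying that the stated lower bounds $v>7$, $v>9$, and the separate treatment of $v=9$ are exactly what is needed to meet the side conditions $n\neq4$ and $k'>1$ in Lemmas~\ref{4}, \ref{null 6k+4} and \ref{null 4k+2}.
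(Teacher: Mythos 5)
Your proposal is correct and matches the paper's own (implicit) proof: the theorem is stated as a summary of Theorems~\ref{recursive 3-flow}, \ref{recursive 4-flow} and \ref{recursive 5-flow}, obtained by feeding Lemmas~\ref{4}, \ref{null 6k+4}, \ref{null 4k+2} and \ref{K_10} into Lemma~\ref{2v+1} exactly as you do, with the same monotonicity observation handling $k\geq 3,4,5$. Your residue bookkeeping and the checks of the side conditions $n\neq 4$ and $k'>1$ are all accurate.
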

We note that the zero-sum flow on the blocks of the embedded STS$(v)$ remains unchanged.
\\[2ex]

\noindent
{\bf\large Acknowledgments} \\ 
This work was done while S.Akbari was visiting the University of Toronto and supported by NSERC Discovery Grant 455994. Also, the research of the first author was in part supported by a grant from IPM (No.\ 93050212). The second and third authors are both supported by the NSERC Discovery Grant program.

\end{document}